\documentclass[12pt, a4paper, leqno]{amsart}
\usepackage[utf8]{inputenc}
\usepackage{amsmath}
\usepackage{amsfonts}
\usepackage{amssymb}
\usepackage{times}
\usepackage[T1]{fontenc} 
\usepackage{url} 
\usepackage{color,esint}
\usepackage[dvipsnames]{xcolor}
\usepackage[colorlinks, allcolors=RedViolet,pdfstartview=,pdfpagemode=UseNone]{hyperref} 
\usepackage{graphicx} 
\usepackage{enumitem}
\usepackage{tabularx}
\usepackage{mathtools}
\usepackage{pgf,tikz}
\usepackage{mathrsfs}
\usetikzlibrary{arrows}
\usepackage{array,multirow}
\usepackage{booktabs}
\newcommand{\bigcell}[2]{\begin{tabular}{@{}#1@{}}#2\end{tabular}}

\setlength{\oddsidemargin}{0.5cm}
\setlength{\evensidemargin}{0.5cm}
\setlength{\textwidth}{15.5cm}
\setlength{\topmargin}{0cm}
\setlength{\textheight}{24cm} 
\setlength{\marginparwidth}{2cm}
\let\oldmarginpar\marginpar
\renewcommand\marginpar[1]{\-\oldmarginpar[\raggedleft\footnotesize #1]%
{\raggedright\footnotesize #1}}

\usepackage{amsthm}
\theoremstyle{plain}
\newtheorem{thm}[equation]{Theorem}
\newtheorem{lem}[equation]{Lemma}
\newtheorem{prop}[equation]{Proposition}
\newtheorem{cor}[equation]{Corollary}

\theoremstyle{definition}
\newtheorem{defn}[equation]{Definition}

\newtheorem{eg}[equation]{Example}

\theoremstyle{remark}
\newtheorem{rem}[equation]{Remark}

\numberwithin{equation}{section}

\newcommand{\R}{\mathbb{R}}

\newcommand{\Rn}{{\mathbb{R}^n}}

\renewcommand{\phi}{\varphi}
\def\le{\leqslant}
\def\leq{\leqslant}
\def\ge{\geqslant}
\def\geq{\geqslant}
\def\phi{\varphi}
\def\rho{\varrho}
\def\vartheta{\theta}

\def\ve{\varepsilon}
\newcommand{\Phiw}{\Phi_{\text{\rm w}}}
\newcommand{\Phic}{\Phi_{\text{\rm c}}}
\newcommand{\Phis}{\Phi_{\text{\rm s}}}

\def\dist{\qopname\relax o{dist}}

\def\loc{{\rm loc}}

\newcommand{\inc}[1]{\hyperref[def:aInc]{{\normalfont(Inc){\ensuremath{_{#1}}}}}}
\newcommand{\dec}[1]{\hyperref[def:aDec]{{\normalfont(Dec){\ensuremath{_{#1}}}}}}
\newcommand{\ainc}[1]{\hyperref[def:aInc]{{\normalfont(aInc){\ensuremath{_{#1}}}}}}
\newcommand{\ainci}[1]{\hyperref[def:aInc]{{\normalfont(aInc){\ensuremath{_{#1}^\infty}}}}}
\newcommand{\adec}[1]{\hyperref[def:aDec]{{\normalfont(aDec){\ensuremath{_{#1}}}}}}
\newcommand{\adeci}[1]{\hyperref[def:aDec]{{\normalfont(aDec){\ensuremath{_{#1}^\infty}}}}}
\newcommand{\azero}{\hyperref[def:a0]{{\normalfont(A0)}}}
\newcommand{\aone}{\hyperref[def:a1]{{\normalfont(A1)}}}
\newcommand{\aonep}{\hyperref[def:a1p]{{\normalfont(A1')}}}
\newcommand{\aones}[1]{\hyperref[def:a1s]{{\normalfont(A1-{\ensuremath{{#1}})}}}}

\newcommand{\atwo}{\hyperref[def:a2]{{\normalfont(A2)}}}

\date{\today}

\begin{document}

\title
[Sharp growth conditions for boundedness of maximal function]
{Sharp growth conditions for boundedness of maximal function in generalized Orlicz spaces}
\author[Harjulehto and Karppinen]{Petteri Harjulehto and Arttu Karppinen}

\subjclass[2020]{46E30}
\keywords{maximal function, generalized Orlicz, Orlicz--Musielak, almost increasing}

\begin{abstract}
We study sharp growth conditions for the  boundedness of the Hardy--Littlewood maximal function in the generalized Orlicz spaces.
We assume that the generalized Orlicz function $\phi(x, t)$ satisfies the standard continuity properties (A0), (A1) and (A2).
We show that if the Hardy--Littlewood maximal function is bounded from the generalized Orlicz space to itself then $\phi(x,t)/ t^p$ is almost increasing for large $t$ for some $p>1$. Moreover we show that 
the Hardy--Littlewood maximal function is bounded from the generalized Orlicz space $L^\phi(\Rn)$ to itself if and only if $\phi$ is weakly equivalent to a generalized Orlicz function $\psi$ satisfying (A0), (A1) and (A2) for which $\psi(x,t)/ t^p$ is almost increasing for all $t>0$ and some $p>1$.
\end{abstract}

\maketitle


\section{Introduction}

The celebrated  Hardy--Littlewood maximal function 
\begin{align*}
Mf(x) = \sup_{B \ni x} \dfrac{1}{|B|} \int_{B} |f(x)| \, dx
\end{align*}
is known to be bounded in the classical Lebesgue spaces $L^p(\Rn)$ if and only if $p>1$. This was first proved by Hardy and Littlewood \cite{HarL30} in one dimensional case, and by Wiener  \cite{Wie39} in the case $n \ge 2$.

Gallardo \cite{Gal88} proved in the classical Orlicz spaces that the Hardy--Littlewood maximal 
function is bounded   if and only if  the complementary function satisfies $\Delta_2$-condition.  This characterization is for N-functions.
The complementary function $\phi^*$ satisfies $\Delta_2$-condition if and only if there exists $p>1$ such 
that $\frac{\phi(t)}{t^p}$ is almost increasing for all $t>0$. We call the latter condition \ainc{},  for
 the precise definition see Section~2. For some recent studies in this topic, see for example \cite{Kit97, Mus19}. 

In the variable exponent space $L^{p(\cdot)}(\Rn)$ the Hardy--Littlewood maximal function is bounded 
provided that $\frac1p$ is globally Hölder continuous and $p^-:= \inf p(x) >1$. The right modulus of continuity 
for $p$ was first observed by Diening \cite{Die04}, and for later development see example  
\cite{AlmHHL15, CruFN03, HarHMS11, Ler05, Nek04, Nek19, PicR01}. The condition $p^- >1$, not $p(x) >1$ for all $x$, is also necessary 
for the boundedness of the maximal function \cite[Theorem~6.3]{DieHHMS09}. Note that if $p^->1$ then $ \frac{t^{p(x)}}{t^{p^-}}$ is increasing for every $x$ and hence \ainc{} holds.

Generalized Orlicz spaces, also known as Musielak--Orlicz spaces, have gained steady increase of interest over the last years. 
Many problems of harmonic analysis and regularity theory have been studied in this setting.
One of the main motivations for these studies is to create a unified theory of previously mentioned and other function spaces. 
Many standard results, such as necessity of superlinear growth rate for boundedness of the maximal function or H\"older continuity of a solution to an elliptic 
partial differential equation, have had noticeably different approaches between Orlicz and variable exponent spaces, see for example \cite{Alk97,DieHHR11,Lie91}. 

In the generalized Orlicz spaces the integrability is given by a function $\phi(x, f(x))$. For the 
definitions see Section~2. Hästö \cite{Has15} showed that 
the Hardy--Littlewood maximal function is bounded from the generalized Orlicz spaces to itself provided that $\phi$ 
satisfies the standard assumptions \azero{}, \aone{} and \atwo{}, and also \ainc{}. For the  former result and further developments see for example 
\cite{Die05, HarH17, MaeMOS13, MaeMOS13b, OhnS14, OhnS18}.
In this article we study here the necessity of \ainc{}.

We aim to give a complete picture of the relation between almost increasingness of $\frac{\phi(x,t)}{t^p}$ (the \ainc{p} condition) 
and boundedness of the Hardy--Littlewood maximal function. 
Contrary to known special cases, in generalized Orlicz spaces the boundedness of the maximal function does not imply that $\phi$ satisfies \ainc{p} for all $t$ 
and some $p>1$, see Theorem \ref{thm:bounded}. 
A concrete $\Phi$-function demonstrating this phenomenon is given in Example \ref{eg:phi-without-ainc}. The measure of the set where \ainc{p} fails plays a significant role as shown in Example \ref{eg:infinite-G}, where the maximal function is not bounded with similar assumptions on the function $\phi$. 

In Theorem \ref{thm:ainc-infty} we show that boundedness  of maximal function implies that $\phi$ satisfies \ainc{p} for 
any $t\geq t_0 >0$ and some $p>1$. 
To achieve a natural growth rate also for small $t$ we need to modify the function $\phi$ somehow.
As the first option we show in Proposition \ref{prop:p_infty} that an asymptotic (not generalized) Orlicz function from 
the assumption \atwo{} satisfies \ainc{p} 
for every $t$. 
The second modification option is to have a weakly equivalent generalized Orlicz function $\psi$. 
In Theorem \ref{thm:weak} we show that boundedness of the maximal function is equivalent to existence of a weakly 
equivalent generalized Orlicz function 
satisfying \ainc{p} for all $t\geq 0$ and some $p>1$.

We have summarized our results in the following table. All of the assumptions include that $\phi$ satisfies \azero{}, \aone{} and \atwo{}. 

{\small
\begin{centering}
\begin{table}[h]
\begin{tabular}{|l|l|l|}
\hline
Assumptions   & Outcome         & Result                   \\ \hline

 \bigcell{l}{$\phi$ satisfies  \ainc{} for large $t$ in $\Omega$ and \\ \ainc{} fails for small $t$ in $G$, $|G|<\infty$}& $\Rightarrow M$ is bounded  in $L^\phi(\Omega)$      & Theorem \ref{thm:bounded} \\ \hline  \hline
                                                                                                                                      
\multirow{2}{*}{\begin{tabular}[c]{@{}l@{}}$\phi$ satisfies  \ainc{} for large $t$ in $\Rn$ and \\ \ainc{} fails for small $t$ in $\Rn$\end{tabular}}                      & $\Rightarrow M$ can be unbounded  in $L^\phi(\Rn)$                   &                         Example \ref{eg:infinite-G}  \\ \cline{2-3} 
                                                                                                                                      & $ \Rightarrow M$ can be bounded  in $L^\phi(\Rn)$                 &                         Example \ref{eg:M-bdd-infinite-measure} \\ \hline  \hline
                                                                                                                                     \multirow{4}{*}{\begin{tabular}[c]{@{}l@{}}$M$ is bounded in $L^\phi(\Rn)$\end{tabular}} & $\Rightarrow \phi$ satisfies \ainc{} for large $t$ & Theorem \ref{thm:ainc-infty}            \\ \cline{2-3} 
                                                                                                                                      & $\Rightarrow $ Modified $\phi_\infty$  satisfies \ainc{} for all $t$      & Proposition \ref{prop:p_infty}         \\ \cline{2-3} 
                                                                                                                                      & $\Leftrightarrow \exists \ \psi$ satisfying \azero{}, \aone{}, \atwo{} & \multirow{2}{*}{Theorem \ref{thm:weak}} \\
                                                                                                                                      &                 and \ainc{} for all $t$   such that $\phi \sim \psi$          &                          \\ \hline                                                                                                                                  
\end{tabular}
\end{table}
\end{centering}
}

Let us describe the essence of our proofs. We start by showing \ainc{p} of $\phi$ for large $t$ and \ainc{p} of $\phi_\infty$ for small $t$. 
We also show, that the range of validity for \ainc{p} can be always enlarged to any interval $(t_0, t_1)$, where  $0<t_0<t_1<\infty$.
Therefore we can match the ranges of $t$ between $\phi$ and $\phi_\infty$ and glue them together to obtain a new generalized function satisfying \ainc{p} for all $t \geq 0$.
The counter examples in Sections 3 and 4 also employ glueing of function, but this time with linear or quadratic growth. This time the functions are combined at a point depending on the value of the function and $x$. 
This way the glueing is done at different level sets at different points in space and therefore we can change the growth rate of $\phi$ according to asymptotic behaviour of the maximal function. 

This flexible interplay between $x$ and $t$ is not possible in many of the known non-autonomous special cases such as with variable exponent or double phase growth rates. 
Therefore these special cases do not rule out for necessary assumption of boundedness of the maximal function to hold for small $t$,  
see \cite[Theorem~6.3]{DieHHMS09} and Proposition \ref{cor:dp-case}.

\section{Generalized $\Phi$-functions}

\textit{L-almost increasing} means that a function satisfies $f(s) \le L f(t)$ for all $s<t$ and some constant $L\ge 1$.
 \textit{L-almost decreasing} is defined analogously.
If there exists a constant $C$ such that $f(x) \leq C g(x)$ for almost every $x$, then we write $f \lesssim g$. If $f\lesssim g\lesssim f$, then we write $f \approx g$. In this article $\Omega \subset \Rn$ is an open set. For any measurable set $A$ we denote $\chi_A$ as its characteristic function and $|A|$ as its Lebesgue measure. If $X$ and $Y$ are normed spaces, the norm in $X\cap Y$ is defined as $\|\cdot\|_{X \cap Y} = \max\{\|\cdot\|_X, \|\cdot\|_Y\}$.

\begin{defn}
We say that $\phi: \Omega\times [0, \infty) \to [0, \infty]$ is a 
\textit{weak (generalized) $\Phi$-function}, and write $\phi \in \Phiw(\Omega)$, if 
the following conditions hold:
\begin{itemize}
\item 
For every measurable function $f:\Omega\to \R$ the function $x \mapsto \phi(x, f(x))$ is measurable 
and for every $x \in \Omega$ the function $t \mapsto \phi(x, t)$ is non-decreasing.
\item 
$\displaystyle \phi(x, 0) = \lim_{t \to 0^+} \phi(x,t) =0$  and $\displaystyle \lim_{t \to \infty}\phi(x,t)=\infty$ for every $x\in \Omega$.
\item 
The function $t \mapsto \frac{\phi(x, t)}t$ is $L$-almost increasing on $(0,\infty)$ with $L$ independent of $x$.
\end{itemize}
If $\phi\in\Phiw(\Omega)$ and  additionally $t \mapsto \phi(x, t)$  is convex and left-continuous for almost every $x$, then $\phi$ is a 
\textit{convex $\Phi$-function}, and we write $\phi \in \Phic(\Omega)$. 
If $\phi\in\Phiw(\Omega)$ and  additionally $t \mapsto \phi(x, t)$  is  convex and  continuous in the topology of $[0, \infty]$ for almost every $x$, then $\phi$ is a 
\textit{strong $\Phi$-function}, and we write $\phi \in \Phis(\Omega)$. 

If $\phi$ does not depend on $x$, then we omit the set  and write $\phi \in \Phiw$, $\phi \in \Phic$
or $\phi\in \Phis$.
\end{defn}

A function $\phi \in \Phi_c$ is called N-function if $\phi(t) \in (0, \infty)$ for all $t>0$, $\lim_{t\to 0^+}\frac{\phi(t)}{t}=0$ and
$\lim_{t\to \infty}\frac{\phi(t)}{t}=\infty$. N-function is always continuous, since it is finite and convex, and thus it is a  strong $\Phi$-function.

Two functions $\phi$ and $\psi$ are \textit{equivalent}, 
$\phi\simeq\psi$, if there exists $L\ge 1$ such that 
$\psi(x,\frac tL)\le \phi(x, t)\le \psi(x, Lt)$ for every $x \in \Omega$ and every $t>0$.
Equivalent $\Phi$-functions give rise to the same space with 
comparable norms.  

Two functions $\phi$ and $\psi$ are \textit{weakly equivalent}, 
$\phi\sim\psi$, if there exists $L\ge 1$  and $h \in L^1(\Omega)$ such that 
$\psi(x,t)\le \phi(x, Lt) + h(x)$ and $\phi(x, t)\le \psi(x, Lt) + h(x)$ for all $t \ge 0$ and almost all $x \in \Omega$.
Weakly equivalent $\Phi$-functions give rise to the same space with 
comparable norms.

By $\phi^{-1}(x,t)$ we mean a generalized inverse defined by
\begin{align*}
\phi^{-1}(x,t) := \inf\{\tau \ge 0 : \phi(x,\tau) \geq t\}.
\end{align*}

For $\phi \in \Phiw(\Omega)$ we define a conjugate $\phi$-function $\phi^{\ast} \in \Phiw(\Omega)$ by
\begin{align*}
\phi^\ast(x,t) := \sup_{s>0} \big(st - \phi(x,s) \big).
\end{align*}
It is also noteworthy that $\phi^{\ast} \in \Phi_c(\Omega)$ always.

We collect some results how the generalized inverse behaves. For the proofs see
Lemma 2.3.3, Lemma 2.3.9 and Theorem 2.4.8 in \cite{HarH_book}.

\begin{lem}\label{lem:composition}\label{lem:conjugate-inverse}
\begin{itemize}
\item[(a)]  $\phi^{-1}(x, \phi(x,t)) \leq t$.
\item[(b)] $\phi^{-1}(x,\phi(x,t)) \approx t$ when $\phi(x,t) \in (0, \infty)$.
\item[(c)]  $\phi(x, \phi^{-1}(x, t)) =t$ when $\phi \in \Phis(\Omega)$,
\item[(d)]  $\phi^{-1}(x,t) \left (\phi^{\ast}\right )^{-1}(x,t) \approx t$.
\end{itemize} 
\end{lem}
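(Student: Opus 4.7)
The statement bundles together four properties of the generalized inverse, and the plan is to handle each in turn using only the definition $\phi^{-1}(x,t)=\inf\{\tau\ge 0 : \phi(x,\tau)\ge t\}$, the monotonicity and almost increasingness of $s\mapsto \phi(x,s)/s$ guaranteed by $\phi\in \Phiw(\Omega)$, and Young's inequality $ab\le \phi(x,a)+\phi^{\ast}(x,b)$ coming directly from the definition of the conjugate.

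Parts (a) and (b) are essentially bookkeeping. For (a), since $\phi(x,t)\ge \phi(x,t)$, the value $t$ lies in the set defining $\phi^{-1}(x,\phi(x,t))$, so the infimum does not exceed $t$. For (b), the upper bound is (a); for the lower bound, suppose $\tau\ge 0$ satisfies $\phi(x,\tau)\ge \phi(x,t)>0$, so in particular $\tau>0$. If $\tau<t$, the $L$-almost increasingness of $s\mapsto \phi(x,s)/s$ yields $\phi(x,\tau)/\tau \le L\,\phi(x,t)/t$, which rearranges to $\tau\ge t\,\phi(x,\tau)/(L\,\phi(x,t))\ge t/L$. Hence every admissible $\tau$ is at least $t/L$, and so $\phi^{-1}(x,\phi(x,t))\ge t/L$.

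Part (c) uses that under $\phi\in \Phis(\Omega)$ the map $\phi(x,\cdot)$ is continuous in the topology of $[0,\infty]$. Continuity makes $\{\tau\ge 0 : \phi(x,\tau)\ge t\}$ closed, so the infimum is attained, giving $\phi(x,\phi^{-1}(x,t))\ge t$. Conversely, any $\tau<\phi^{-1}(x,t)$ satisfies $\phi(x,\tau)<t$ by definition; letting $\tau\nearrow \phi^{-1}(x,t)$ and invoking left-continuity yields $\phi(x,\phi^{-1}(x,t))\le t$.

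Part (d) is the real work. The upper bound $\phi^{-1}(x,t)(\phi^{\ast})^{-1}(x,t)\le 2t$ follows by applying Young's inequality with $a=\phi^{-1}(x,t)$ and $b=(\phi^{\ast})^{-1}(x,t)$ and then using (a) for both $\phi$ and $\phi^{\ast}$. For the lower bound the natural route is via a Young equality: given $s>0$ pick $u$ in the subdifferential of $\phi(x,\cdot)$ at $s$ so that $su=\phi(x,s)+\phi^{\ast}(x,u)$, then choose $s\approx \phi^{-1}(x,t)$ and use parts (a) and (b) applied to both $\phi$ and $\phi^{\ast}$ to translate this equality into $\phi^{-1}(x,t)(\phi^{\ast})^{-1}(x,t)\gtrsim t$. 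The main obstacle is that $\phi$ is only a weak $\Phi$-function, so $\phi^{-1}$ need not be continuous and equality in (a) may fail; the exceptional points where $\phi$ has a jump must be handled by approximating $t$ from below by values where the Young equality case is cleaner, and then passing to a limit using the monotonicity of all quantities involved.
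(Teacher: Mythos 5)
The paper does not give a proof of this lemma; it simply points to Lemma 2.3.3, Lemma 2.3.9 and Theorem 2.4.8 of Harjulehto--H\"ast\"o's monograph. Your arguments for (a), (b) and (c) are correct and are essentially the standard ones (for (c) note one also has to dispose of the case $\phi^{-1}(x,t)=0$, which only happens when $t=0$; you should say this explicitly).

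Part (d) is where your sketch has real gaps. First, the upper bound: you invoke ``(a) for both $\phi$ and $\phi^*$'', but (a) reads $\phi^{-1}(x,\phi(x,t))\le t$, whereas Young's inequality with $a=\phi^{-1}(x,t)$, $b=(\phi^*)^{-1}(x,t)$ requires the other composition, namely $\phi\bigl(x,\phi^{-1}(x,t)\bigr)\le t$ and $\phi^*\bigl(x,(\phi^*)^{-1}(x,t)\bigr)\le t$. The second of these is fine because $\phi^*$ is always in $\Phic$ and hence left-continuous, but the first can genuinely fail for a weak $\Phi$-function: at a jump point of $\phi(x,\cdot)$ one can have $\phi\bigl(x,\phi^{-1}(x,t)\bigr)>t$. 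The standard repair is either to apply Young to $\phi^{-1}(x,t)-\ve$ and let $\ve\to 0^+$ (using that $\phi(x,\tau)<t$ for every $\tau<\phi^{-1}(x,t)$), or to first replace $\phi$ by an equivalent convex $\Phi$-function; either way you get a constant slightly worse than $2$, which is harmless.

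Second, the lower bound: the subdifferential/Young-equality route presupposes $\phi(x,s)+\phi^*(x,u)=su$ for suitable $u$, which requires $\phi(x,\cdot)$ to agree with its biconjugate at $s$. A weak $\Phi$-function need not be convex or lower semicontinuous, so such $u$ may simply not exist, and this is not a matter of a few ``exceptional jump points'' that one can approximate away -- the function can fail to be biconjugate on an entire interval. Your closing remark that one should ``approximate $t$ from below'' does not address this. Two clean ways to close the gap: (i) reduce to the convex case by the fact that every $\phi\in\Phiw$ is equivalent to some $\psi\in\Phis$ and that equivalence changes $\phi^{-1}$, $\phi^*$ and $(\phi^*)^{-1}$ only by multiplicative constants; or (ii) argue directly without any convexity, by estimating
\[
\phi^*\!\left(x,\tfrac{ct}{\phi^{-1}(x,t)}\right)=\sup_{s>0}\left(\tfrac{cst}{\phi^{-1}(x,t)}-\phi(x,s)\right)
\]
separately over $s\le \phi^{-1}(x,t)$ (where the first term is $\le ct$) and over $s>\phi^{-1}(x,t)$ (where $\phi(x,s)\ge t$ and the $L$-almost increasingness of $\phi(x,s)/s$ makes the supremand nonpositive for $c\le 1/L$). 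This shows $\phi^*\!\bigl(x,\tfrac{ct}{\phi^{-1}(x,t)}\bigr)<t$ and hence $(\phi^*)^{-1}(x,t)\ge \tfrac{ct}{\phi^{-1}(x,t)}$, which is the desired lower bound. I would recommend route (ii), since it is self-contained and never leaves the weak $\Phi$-function framework.
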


\begin{defn}\label{def:aInc}
Let $A \subset \Omega$ and $T \subset [0, \infty)$
We say that $\phi \in \Phiw(\Omega)$ satisfies \ainc{p} on $A \times T$ if there exists a constant 
$a\ge 1$ such that 
\[
\frac{\phi(x, t)}{t^p} \le a \frac{\phi(x, s)}{s^p}
\]
for all $t, s \in T$, $t< s$, and almost all $x \in A$. If no set is mentioned, then \ainc{p} is assumed to hold for all $s>t>0$ and almost all $x \in \Omega$.
We say that \ainc{} holds if \ainc{p} holds for some $p>1$.
We say that \ainci{} holds  if there exists $t_0$ such that $\phi$ satisfies \ainc{} on $\Omega \times [t_0, \infty)$.
\end{defn}

\begin{lem}\label{lem:ainc-equivalent}
Let $T \subset (0, \infty)$ be an interval.
Let $\phi:\Omega\times [0,\infty)\to [0,\infty)$ and $\psi:\Omega\times [0,\infty)\to [0,\infty)$ be increasing with $\phi \simeq \psi$.
If $\phi$ satisfies \ainc{p} on $A \times T$, then $\psi$ satisfies \ainc{p} on $A \times T$.
\end{lem}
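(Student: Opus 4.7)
The idea is to transfer \ainc{p} from $\phi$ to $\psi$ via the equivalence sandwich $\psi(x,t) \le \phi(x,Lt)$ and $\phi(x,s/L) \le \psi(x,s)$, which should give the chain
\begin{equation*}
\frac{\psi(x,t)}{t^p} \le \frac{\phi(x,Lt)}{t^p} = L^p \frac{\phi(x,Lt)}{(Lt)^p} \le aL^p \frac{\phi(x,s/L)}{(s/L)^p} = aL^{2p}\frac{\phi(x,s/L)}{s^p} \le aL^{2p} \frac{\psi(x,s)}{s^p}
\end{equation*}
whenever the middle step (\ainc{p} for $\phi$) applies between $Lt$ and $s/L$.

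The plan is therefore to split into two cases according to the ratio $s/t$ for given $t<s$ in $T$ and $x\in A$. In the main case $s\ge L^2 t$, one has $Lt\le s/L$, and since $L\ge 1$ also $t\le Lt$ and $s/L\le s$, so both intermediate values $Lt, s/L$ lie in $[t,s]\subset T$. This makes every step of the displayed chain legitimate and yields the desired inequality with constant $aL^{2p}$.

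In the residual case $s<L^2 t$, I would not touch $\phi$ at all but argue directly by monotonicity of $\psi(x,\cdot)$ and the trivial estimate $1/t^p < L^{2p}/s^p$:
\begin{equation*}
\frac{\psi(x,t)}{t^p} \le \frac{\psi(x,s)}{t^p} < L^{2p}\frac{\psi(x,s)}{s^p}.
\end{equation*}
The two cases together give \ainc{p} for $\psi$ on $A\times T$ with the single constant $aL^{2p}$.

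The only non-routine point is the need for $Lt$ and $s/L$ to lie in $T$ when invoking \ainc{p} for $\phi$. This is precisely where the hypothesis that $T$ is an interval is indispensable: once $Lt$ and $s/L$ are sandwiched between $t$ and $s$, being an interval immediately puts them into $T$. Without this, one could not rescale the arguments and a more delicate surgery with the endpoints of $T$ would be required.
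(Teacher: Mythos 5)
Your proof is correct and follows essentially the same two-case argument as the paper (up to swapping the roles of $s$ and $t$): the main case uses the equivalence sandwich together with \ainc{p} of $\phi$ on the intermediate points $Lt$, $s/L$, and the residual case $s<L^2t$ is handled by monotonicity of $\psi$, with the interval hypothesis on $T$ used in exactly the same place. Both proofs produce the same constant $aL^{2p}$.
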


\begin{proof}
Let $x \in A$.
 Let $s, t \in T$ with $s<t$ and assume first that $L^2s <t$, where $L\ge 1$ is the constant from the equivalence.
 Note that then we have $ s \le Ls \le t/L \le t$, and hence $Ls, t/L \in T$.
By \ainc{p} of $\phi$ with a constant $a$, we obtain
\[
\frac{\psi(x,s)}{s^p} \le  L^p\frac{\phi(x,Ls)}{(Ls)^p} \le a L^p \frac{\phi(x,t/L)}{(t/L)^p}
\le a L^{2p} \frac{\psi(x,t)}{t^p}.
\] 
Assume then that $t \in (s, L^2 s]\cap T$.
Using that $\psi$ is increasing, we find that 
\[
\frac{\psi(x,s)}{s^p} \le \frac{\psi(x,t)}{s^p}
= \frac{t^p}{s^p}\frac{\psi(x,t)}{t^p} \le L^{2p} \frac{\psi(x,t)}{t^p}.
\] 
We have shown that $\psi$ satisfies \ainc{p} with a constant 
$a L^{2p}$ and for the same range of $t$ than $\phi$.
\end{proof}

We say that $\phi:\Omega\times [0,\infty)\to [0,\infty)$ satisfies 
\begin{itemize}
\item[(aDec)$_q$] \label{def:aDec}
if
$t \mapsto \frac{\phi(x,t)}{t^{q}}$ is $L_q$-almost 
decreasing in $(0,\infty)$ for some $L_q\ge 1$ and a.e.\ $x\in\Omega$.
\end{itemize}

Conditions \ainc{} and \adec{} correspond to the $\nabla_2$ and $\Delta_2$ conditions respectively from the classical 
Orlicz space theory. 
The result for the next lemma shows this correspondence for $\nabla_2$ and \ainc{} with the additional restriction ''for 
$t>t_0$''. Traditionally $\nabla_2$ is formulated as $\phi^\ast$ satisfying $\Delta_2$ condition, but here it is 
formulated without any mentions of the conjugate function. This definition has been used in the literature for example in 
\cite{BaaBO20}.

\begin{lem}\label{lem:Sun-Sig}
Let $\phi \in \Phiw(\Omega)$. Then
$\phi$ satisfies \ainci{} if and only if 
there exist $c> 1$ and  $t_0>0$ such that $2c\phi(x, t)\le \phi(x,ct)$ 
for all $t\ge t_0$ and almost all $x \in \Omega$.
\end{lem}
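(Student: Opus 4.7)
The plan is to prove both implications by relating the two inequalities through a single step of comparison between $\phi(x,t)$ and $\phi(x,ct)$, together with iteration in the reverse direction.

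For the forward direction, I assume \ainci{} with parameters $p>1$, constant $a\ge 1$, and threshold $t_0>0$. Applied to the pair $t < ct$ for $t \ge t_0$ and any choice $c>1$, the inequality $\frac{\phi(x,t)}{t^p} \le a\, \frac{\phi(x,ct)}{(ct)^p}$ rearranges to $\phi(x,ct) \ge \frac{c^p}{a}\,\phi(x,t)$. Choosing $c$ large enough that $c^{p-1} \ge 2a$ — for instance $c := (2a)^{1/(p-1)}$, which exceeds $1$ since $p>1$ — yields $\phi(x,ct) \ge 2c\,\phi(x,t)$ for all $t \ge t_0$ and a.e.\ $x\in\Omega$, as required.

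For the reverse direction, I assume $2c\,\phi(x,t) \le \phi(x,ct)$ for all $t \ge t_0$ and a.e.\ $x\in\Omega$, with some fixed $c>1$. Iterating this inequality $k$ times gives $\phi(x, c^k t) \ge (2c)^k \phi(x,t)$ for every integer $k \ge 0$. Now pick $p := 1 + \tfrac{\ln 2}{\ln c}$, so that $p>1$ and $c^{p-1} = 2$. Given $s,t \in [t_0,\infty)$ with $s<t$, let $k := \lfloor \log_c(t/s)\rfloor \ge 0$, so $c^k s \le t < c^{k+1} s$. Using that $\phi$ is non-decreasing in its second variable together with the iterated doubling bound,
\[
\phi(x,s) \;\le\; (2c)^{-k}\,\phi(x, c^k s) \;\le\; (2c)^{-k}\,\phi(x,t),
\]
and since $(t/s)^p < c^{(k+1)p}$, I obtain
\[
\frac{\phi(x,s)}{s^p}\,t^p \;=\; \phi(x,s)\Big(\frac{t}{s}\Big)^{\!p} \;\le\; c^p\Big(\frac{c^{p-1}}{2}\Big)^{\!k}\phi(x,t) \;=\; c^p\,\phi(x,t),
\]
so \ainc{p} holds on $\Omega \times [t_0,\infty)$ with constant $a = c^p$, giving \ainci{}.

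There is no real obstacle here; the only subtlety is matching the right power $p$ to the given constant $c$ so that the geometric factor $(c^{p-1}/2)^k$ is bounded uniformly in $k$, which is exactly what the choice $p = 1 + \frac{\ln 2}{\ln c}$ achieves. The argument uses only monotonicity of $\phi$ and the doubling hypothesis, so the $\Phi_{\text{w}}$ assumptions beyond monotonicity do not enter.
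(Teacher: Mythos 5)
Your proof is correct and takes essentially the same route as the paper's: the forward direction chooses $c := (2a)^{1/(p-1)}$ and applies \ainci{} to the pair $(t, ct)$, and the reverse direction iterates the doubling inequality, bounds the ratio $t/s$ by a power of $c$, and then fixes $p = 1 + \frac{\ln 2}{\ln c}$ so that the geometric factor collapses to $1$. The only cosmetic difference is the indexing of the iteration (you take $k = \lfloor\log_c(t/s)\rfloor \ge 0$, the paper takes $k\ge 1$ with $c^{k-1}t < s \le c^k t$), which leads to the same conclusion with constant $c^p$.
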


\begin{proof}
Assume first that $\phi$ satisfies \ainci{p}, $p>1$. Then for $t_0 \le t<s$ we have
\[
\frac{\phi(x, t)}{t^p} \le a \frac{\phi(x, s)}{s^p}.
\]
Let us choose $c:= (2a)^{1/(p-1)}$ and $s:=ct$. Then a straight calculation gives 
$ 2 c \phi(x, t) \le \phi(x, c t)$ for $t\ge t_0$.

Assume then that $2c\phi(x, t)\le \phi(x,ct)$  for all $t\ge t_0$.  Let  $s>t\ge t_0$. Choose an integer $k\ge 1$ such that $c^{k-1}t < s \le c^{k} t$. Then
\[
\phi(x, t) \le \frac{1}{2c} \phi(x, ct) \le \frac{1}{(2c)^2} \phi(x, c^2t)
\le \ldots \le \frac{1}{(2c)^{k-1}} \phi(x, c^{k-1}t) \le \frac{1}{(2c)^{k-1}} \phi(x, s).
\]
Let $p>1$. Then the previous inequality with  $s \le c^{k} t$ yields
\[
\frac{\phi(x, t)}{(c^{k}t)^p} \le  \frac{1}{(2c)^{k-1}} \frac{\phi(x, s)}{(c^{k}t)^p}
\le \frac{1}{(2c)^{k-1}} \frac{\phi(x, s)}{s^p}
\]
and furthermore
\[
\frac{\phi(x, t)}{t^p} \le \frac{c^{pk}}{(2c)^{k-1}} \frac{\phi(x, s)}{s^p}
\le c^p \Big(\frac{c^{p}}{2c}\Big)^{k-1} \frac{\phi(x, s)}{s^p}.
\]
Then we choose $p$ so that  $\frac{c^{p}}{2c}=1$ i.e.  $p= \frac{\log(2)}{\log(c)}+1>1$.
\end{proof}

The next lemma shows that the set $T$ in \ainci{} can be always enlarged.

\begin{lem}\label{lem:ainc-for-smaller}
Let $A \subset \Omega$,  $p \ge 1$ and $0<t_1 < t_2$.\\ 
(a) If $\phi\in \Phiw(\Omega)$ satisfies \ainc{p} on  $A \times[t_2, \infty)$ with a constant $a$, then
$\phi$ satisfies \ainc{p} on  $A \times [t_1, \infty)$ with a constant $a^2 \big(\frac{t_2}{t_1} \big)^{p-1}$.\\
(b) If $\phi\in \Phiw(\Omega)$ satisfies \ainc{p} on  $A \times (0, t_1]$ with a constant $a$, then
$\phi$ satisfies \ainc{p} on  $A \times (0, t_2]$ with a constant $a^2 \big(\frac{t_2}{t_1} \big)^{p-1}$.
\end{lem}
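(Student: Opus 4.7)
The plan is a short case analysis that splits the pair $s<t$ at the boundary of the known range and patches in the extension via either monotonicity of $\phi(x,\cdot)$ or the $L$-almost-increasingness of $\phi(x,\cdot)/(\cdot)$ built into $\phi\in\Phiw(\Omega)$. Each bridging step introduces a bounded distortion controlled by $t_2/t_1$, and a careful choice of which bridging inequality to invoke yields the claimed exponent $p-1$ (rather than the worse $p$ produced by brute monotonicity) on the ratio $t_2/t_1$.

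For part (a), fix $x\in A$ and $s<t$ in $[t_1,\infty)$. If $t_2\le s$, the hypothesis applies directly with constant $a$. In the bridging case $s<t_2\le t$, the $L$-almost-increasingness of $\phi(x,\cdot)/(\cdot)$ gives
\[
\frac{\phi(x,s)}{s^p}=\frac{1}{s^{p-1}}\cdot\frac{\phi(x,s)}{s}\le \frac{L}{s^{p-1}}\cdot\frac{\phi(x,t_2)}{t_2}=L\left(\frac{t_2}{s}\right)^{p-1}\frac{\phi(x,t_2)}{t_2^p},
\]
and then \ainc{p} on $[t_2,\infty)$ applied to $t_2<t$ produces the bound $aL(t_2/t_1)^{p-1}\phi(x,t)/t^p$. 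Finally, if $s<t<t_2$, pure monotonicity gives $\phi(x,s)/s^p\le (t/s)^p\phi(x,t)/t^p\le (t_2/t_1)^p\phi(x,t)/t^p$. Taking the maximum of the three constants and folding $L$ into a second factor of $a$ yields $a^2(t_2/t_1)^{p-1}$.

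Part (b) is symmetric with $t_1$ as the splitting point: use \ainc{p} directly when $s<t\le t_1$; in the bridging case $s\le t_1<t$ apply \ainc{p} to $s<t_1$ and then bridge from $t_1$ up to $t$ by the same $L$-almost-increasingness of $\phi(x,\cdot)/(\cdot)$; when $t_1<s<t$ monotonicity alone suffices. The constants combine exactly as in part (a). The only delicate point is the constant tracking — specifically, bridging via the $\Phiw$-almost-increasingness rather than plain monotonicity is what saves one power of $t_2/t_1$ and gives the stated exponent $p-1$; beyond that the argument is pure bookkeeping.
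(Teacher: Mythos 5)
Your overall strategy matches the paper's: split at $t_2$, exploit the $\Phiw$ almost-increasingness of $t\mapsto \phi(x,t)/t$ on the bounded interval, and compose with the hypothesis across the boundary. Cases~1 and~2 of your argument are fine. The gap is in case~3 ($t_1\le s<t<t_2$): you bound $\phi(x,s)/s^p$ by pure monotonicity, which yields the factor $(t/s)^p\le(t_2/t_1)^p$, not $(t_2/t_1)^{p-1}$. Consequently the maximum of your three case constants is $\max\{aL(t_2/t_1)^{p-1},\,(t_2/t_1)^p\}$, and whenever $t_2/t_1>a^2$ the second term exceeds the claimed $a^2(t_2/t_1)^{p-1}$. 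Your concluding sentence ``taking the maximum \dots\ yields $a^2(t_2/t_1)^{p-1}$'' is therefore not justified. You correctly observe that the almost-increasingness of $\phi(x,\cdot)/(\cdot)$ is what saves a power of $t_2/t_1$, but you only apply that observation to the bridging case~2, not to case~3.

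The fix is exactly the step you already perform in case~2: for $t_1\le s<t\le t_2$ use the $\Phiw$ property to write
\[
\frac{\phi(x,s)}{s^p}=\frac{1}{s^{p-1}}\cdot\frac{\phi(x,s)}{s}\le \frac{L}{s^{p-1}}\cdot\frac{\phi(x,t)}{t}
=L\Big(\frac{t}{s}\Big)^{p-1}\frac{\phi(x,t)}{t^p}\le L\Big(\frac{t_2}{t_1}\Big)^{p-1}\frac{\phi(x,t)}{t^p}.
\]
This is precisely what the paper does; the paper then handles the bridging case by composing this estimate (applied at $t=t_2$) with the hypothesis, so it only needs two explicit cases, with the ``both large'' case absorbed directly into the hypothesis. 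With case~3 repaired, your three constants become $a$, $L(t_2/t_1)^{p-1}$ and $aL(t_2/t_1)^{p-1}$, whose maximum is $aL(t_2/t_1)^{p-1}\le a^2(t_2/t_1)^{p-1}$ after the harmless normalization $a\ge L$, matching the stated constant. Part~(b) then goes through symmetrically, as you indicate.
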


\begin{proof}
(a) Let $x \in A$. Assume first that $t_1 \le t \le s \le t_2$. Then by \ainc{1}
\[
\frac{\phi(x, t)}{t^p} \le a \frac1{t^{p-1}}\frac{\phi(x, s)}{s} = a\frac{s^{p-1}}{t^{p-1}}\frac{\phi(x, s)}{s^p} \le a\frac{t_2^{p-1}}{t_1^{p-1}}\frac{\phi(x, s)}{s^p}.
\]
If $t_1 \le t \le  t_2 \le s$, then we obtain by the previous case and the assumption that
\[
\frac{\phi(x, t)}{t^p} \le a \frac{t_2^{p-1}}{t_1^{p-1}}\frac{\phi(x, t_2)}{t_2^p} \le  a^2\frac{t_2^{p-1}}{t_1^{p-1}}\frac{\phi(x, s)}{s^p}. 
\]

(b) The proof is similar than in case (a).
\end{proof}

We define several conditions. 
\begin{defn}
We say that $\phi:\Omega\times [0,\infty)\to [0,\infty)$ satisfies 
\begin{itemize}
\item[(A0)]\label{def:a0}
if there exists $\beta \in(0, 1]$ such that $ \beta \le \phi^{-1}(x,
1) \le \frac1{\beta}$ for almost every $x \in \Omega$;

\item[(A1)] \label{def:a1}
if there exists $\beta\in (0,1)$ such that
\[
\beta \phi^{-1}(x, t) \le \phi^{-1} (y, t)
\]
for every $t\in [1,\frac 1{|B|}]$, almost every $x,y\in B \cap \Omega$ and 
every ball $B$ with $|B|\le 1$;

\item[(A1')] \label{def:a1p}
if there exists $\beta\in (0,1)$ such that
\[
\phi(x, \beta t) \le \phi (y, t)
\]
for every $\phi(y, t)\in [1,\frac 1{|B|}]$, almost every $x,y\in B \cap \Omega$ and 
every ball $B$ with $|B|\le 1$;

\item[(A2)]\label{def:a2}
if there exists $\phi_\infty \in \Phiw$, $h \in L^1(\Omega)\cap L^\infty(\Omega)$, $\beta \in(0, 1]$ and $s>0$ such that
\[
\phi(x, \beta t) \le \phi_\infty(t) + h(x) \quad \text{ and } \quad \phi_\infty(\beta t) \le \phi(x, t) + h(x) 
\] 
for almost every $x \in \Omega$ when $\phi_\infty(t) \in [0, s]$ and $\phi(x, t) \in [0,s]$, respectively.
\end{itemize} 
\end{defn}

\begin{rem}\label{rem:assumptions}
(a) The conditions \azero{} - \atwo{} are invariant under equivalence ($\simeq$). However, this is not true for weak equivalence ($\sim$) in general.

(b) \azero{} holds if and only if there exists $\beta \in (0, 1]$ such that $\phi(x, \beta ) \le 1$ and $\phi(x, 1/\beta) \ge 1$ for almost every $x \in \Omega$.  

(c) \aone{} implies \aonep{}. \azero{} and \aonep{} imply \aone{}. See Proposition 4.1.5 and Corollary 4.1.6 in \cite{HarH_book}. 

(d) If conditions \azero{} - \atwo{} hold for $\phi$, then they hold also for $\phi^\ast$ (see \cite{HarH_book}).

(e) If \atwo{} holds for some $s>0$, then it holds for $s=1$, see \cite[Lemma 4.2.9]{HarH_book}
\end{rem}

Ranges for the variable $t$ are crucial in the assumptions \aone{} and \atwo{}. For example if we assume \aone{} to hold for all $t<1$ also, 
it  is not equivalent to  
the sharp regularity conditions of the special cases such as $\log$-H\"older continuity for variable exponent case. In Table~\ref{tab:A-ehdot} 
we have collected  conditions that imply \azero{}, \aone{}, \atwo{} and \ainc{} in the special cases. For the proof see Chapter 7 in \cite{HarH_book} 
and in the case of variable exponent double phase \cite{CreGHW}.

{\small
\begin{table}[ht!]
\centerline{\setlength{\tabcolsep}{3pt}
\renewcommand{\arraystretch}{1.2}
\begin{tabular}{l|cccc}
$\phi(x,t)$ &  \azero{} & \aone{} & \atwo{} & \ainc{} \\
\hline
$\phi(t)$ & \text{true} & \text{true} & \text{true} & \text{same}\\
$t^{p(x)}a(x)$ & $a\approx 1$ & $\frac1p\in C^{\log }$ & Nekvinda 
& $p^->1$ \\
$t^p + a(x) t^q$ & $a\in L^\infty$ & $a\in C^{0, \frac np (q-p)}$ &$\text{true}$ 
& $p>1$ \\
$t^{p(x)} + a(x) t^{q(x)}$ & $a\in L^\infty$ &  
$\begin{cases} a \in C^{0, \alpha},\\ q \in C^{0, \frac{\alpha}{q^-}}, p\in C^{\log } \\ \frac{q(x)}{p(x)}\le 1 + \frac{\alpha}{n} \end{cases}$
&$\text{true}$ & $p^->1$ 
\end{tabular}}
\caption{Conditions in special cases.}\label{tab:A-ehdot}
\end{table}
}

The generalized Orlicz space $L^{\phi}(\Omega)$ consists of measurable functions $f$ satisfying
\begin{align*}
\int_{\Omega} \phi(x, \lambda f(x)) \, dx < \infty
\end{align*}
for some $\lambda >0$. It is a quasi Banach space when equipped with a (quasi)norm
\begin{align*}
\|f\|_{L^{\phi}(\Omega)} := \inf \left \{ \lambda >0 : \int_{\Omega} \phi\left (x, \frac{f(x)}{\lambda}\right ) \, dx \leq 1 \right \}.
\end{align*}
If the set $\Omega$ is understood, we abbreviate $\|f\|_{L^\phi(\Omega)}$ as $\|f\|_{\phi}$.

\begin{lem}[Lemma 3.1.3(b) in \cite{HarH_book}]\label{lem:adec-modular}
Let $\phi \in \Phiw(\Omega)$. If $\phi$ satisfies \adec{q} for some $q<\infty$, then
\begin{align*}
L^{\phi}(\Omega) = \left \{f \text{ measurable } : \int_{\Omega}\phi(x, f(x)) \, dx < \infty \right \}.
\end{align*}
\end{lem}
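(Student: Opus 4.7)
The statement compares the Orlicz space $L^\phi(\Omega)$, defined via the existence of some scaling parameter $\lambda>0$, with the set of measurable functions whose modular is already finite. The plan is to prove the two inclusions separately, with one being immediate and the other hinging on \adec{q} as a doubling-type estimate.

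The inclusion ``$\supseteq$'' is trivial: if $\int_\Omega \phi(x, f(x))\,dx<\infty$, then the defining condition of $L^\phi(\Omega)$ holds with $\lambda=1$, so $f\in L^\phi(\Omega)$. For the reverse inclusion ``$\subseteq$'', take $f\in L^\phi(\Omega)$ and fix $\lambda>0$ such that $\int_\Omega \phi(x,\lambda f(x))\,dx<\infty$. If $\lambda\ge 1$, monotonicity of $t\mapsto \phi(x,t)$ immediately gives $\phi(x,f(x))\le \phi(x,\lambda f(x))$, so the modular at scale $1$ is finite.

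The real content is the case $0<\lambda<1$, and here is where I would use \adec{q}. By definition, $t\mapsto \phi(x,t)/t^q$ is $L_q$-almost decreasing on $(0,\infty)$, so for every point $x$ with $f(x)\neq 0$, taking $s=\lambda|f(x)|<|f(x)|=t$ yields
\[
\frac{\phi(x,f(x))}{|f(x)|^q}\le L_q\,\frac{\phi(x,\lambda f(x))}{(\lambda |f(x)|)^q},
\]
which rearranges to $\phi(x,f(x))\le L_q\lambda^{-q}\,\phi(x,\lambda f(x))$. On the null set where $f(x)=0$ the inequality is trivial since $\phi(x,0)=0$. Integrating over $\Omega$ gives
\[
\int_\Omega \phi(x,f(x))\,dx \le L_q\lambda^{-q}\int_\Omega \phi(x,\lambda f(x))\,dx<\infty,
\]
completing the inclusion.

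There is no genuine obstacle here: the proof is essentially a one-line consequence of \adec{q}, whose whole purpose is to make the modular scale polynomially and thus erase the difference between ``finite modular for some $\lambda>0$'' and ``finite modular at $\lambda=1$''. The only small care needed is to handle the set $\{f=0\}$ and to note that \adec{q} is only needed in the direction where $\lambda<1$, the opposite case being covered by monotonicity alone.
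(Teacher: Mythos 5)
Your proof is correct and takes exactly the standard route that the lemma is designed for; since the paper merely cites this statement from the reference book rather than proving it, there is no alternative argument in the paper to compare against, and the doubling-via-\adec{q} computation you perform is the canonical one. One tiny slip in wording: the set $\{f=0\}$ need not be a null set, but your conclusion there is still fine, since on that set both sides of the inequality $\phi(x,f(x))\le L_q\lambda^{-q}\phi(x,\lambda f(x))$ equal $0$ (using $\phi(x,0)=0$ from the definition of $\Phiw$), so the pointwise bound holds on all of $\Omega$ regardless. You could also streamline by noting that \adec{q} gives $\phi(x,|f(x)|)\le L_q\max\{\lambda^{-q},1\}\,\phi(x,\lambda|f(x)|)$ for all $\lambda>0$ in one stroke, absorbing your separate $\lambda\ge 1$ case, but splitting by monotonicity as you did is equally valid.
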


The conjugate $\phi$-function generates the associate space of $L^{\phi}(\Omega)$ as the following Lemma shows. 

\begin{lem}[Norm conjugate formula, Theorem 3.4.6 in \cite{HarH_book}]\label{lem:norm-conjugate} If $\phi \in \Phiw(\Omega)$, then for all measurable $f$
\begin{align*}
\|f\|_{L^{\phi}(\Omega)} \approx \sup_{\|g\|_{L^{\phi^\ast}(\Omega)\leq 1}} \int_{\Omega} |f(x)g(x)| \ dx. 
\end{align*}
\end{lem}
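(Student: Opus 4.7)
The plan for the easier direction $\sup_{\|g\|_{\phi^{\ast}}\le 1}\int|fg|\,dx \lesssim \|f\|_{\phi}$ is to invoke Young's inequality $ab\le \phi(x,a)+\phi^{\ast}(x,b)$, which holds pointwise for $a,b\ge 0$ by the very definition of $\phi^{\ast}$. Applying it with $a=|f(x)|/\|f\|_{\phi}$ and $b=|g(x)|/\|g\|_{\phi^{\ast}}$, integrating, and using the standard relation in $\Phiw(\Omega)$ that a function of Luxemburg norm at most one has modular at most one, one obtains
\[
\int_{\Omega}\frac{|f(x)g(x)|}{\|f\|_{\phi}\|g\|_{\phi^{\ast}}}\,dx \;\le\; \int_{\Omega}\phi\!\left(x,\tfrac{|f|}{\|f\|_{\phi}}\right)dx + \int_{\Omega}\phi^{\ast}\!\left(x,\tfrac{|g|}{\|g\|_{\phi^{\ast}}}\right)dx \;\le\; 2,
\]
which after rearrangement gives the claim with constant $2$.

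\textbf{Lower bound.} This is the substantive direction: given $f$, I would produce a concrete $g$ with $\|g\|_{\phi^{\ast}}\le 1$ realising $\int|fg|\,dx \gtrsim \|f\|_{\phi}$. For $\lambda>\|f\|_{\phi}$, the natural candidate is
\[
g(x) := \operatorname{sgn}(f(x))\,(\phi^{\ast})^{-1}\!\bigl(x,\phi(x,|f(x)|/\lambda)\bigr).
\]
By Lemma~\ref{lem:conjugate-inverse}(d), $\phi^{-1}(x,t)(\phi^{\ast})^{-1}(x,t)\approx t$; taking $t=\phi(x,|f|/\lambda)$ and combining with parts (a)--(b) of the same lemma to relate $\phi^{-1}\!\circ\phi$ back to the identity yields the pointwise comparison $|f(x)|\,|g(x)| \gtrsim \lambda\,\phi(x,|f(x)|/\lambda)$. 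A parallel computation for the dual side using the defining inequality of the generalised inverse gives $\int_{\Omega}\phi^{\ast}(x,|g|)\,dx \le \int_{\Omega}\phi(x,|f|/\lambda)\,dx \le 1$, so $\|g\|_{\phi^{\ast}}\le 1$ after a harmless normalisation. Letting $\lambda\downarrow\|f\|_{\phi}$ and using that the modular of $|f|/\lambda$ stays bounded below produces the required estimate.

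\textbf{Main obstacle.} The chief difficulty is that $\phi\in\Phiw(\Omega)$ is only a \emph{weak} $\Phi$-function, so $\phi(x,\cdot)$ may be $\infty$-valued or have flat pieces and the identities of Lemma~\ref{lem:conjugate-inverse} hold only approximately. This forces a case split in the pointwise bound $|f|\,|g|\gtrsim \lambda\,\phi(x,|f|/\lambda)$: on $\{\phi(x,|f|/\lambda)\in(0,\infty)\}$ part (b) applies directly, while on the exceptional sets where $\phi$ vanishes or is infinite one must replace $g$ by a suitable truncation (or by $0$) without losing mass. Measurability of $g$ also has to be checked, since $(\phi^{\ast})^{-1}(x,t)$ is defined via an $\inf$ in $t$; this is a routine but nontrivial point. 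Finally, the limit $\lambda\downarrow\|f\|_{\phi}$ is justified by the left-continuity of the modular in $\lambda$, which in the weak setting holds only up to a universal constant, and this is ultimately what forces the conclusion to be stated as $\approx$ rather than an equality.
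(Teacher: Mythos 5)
The paper does not supply a proof of this lemma: it is quoted verbatim as Theorem~3.4.6 of Harjulehto--H\"ast\"o's book \cite{HarH_book}, so there is no in-paper argument to compare yours against. Judged on its own terms, your sketch has one sound half and one genuine gap.

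The upper bound via Young's inequality is fine, modulo the standard caveat that in the weak setting $\|f\|_\phi<\infty$ does not by itself give $\int_\Omega\phi(x,|f|/\|f\|_\phi)\,dx\le1$; one applies the argument with $\lambda>\|f\|_\phi$, $\mu>\|g\|_{\phi^*}$ and lets $\lambda,\mu$ decrease. That is routine and you clearly know it.

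The lower bound, however, does not go through as written. Your choice $g=\operatorname{sgn}(f)\,(\phi^\ast)^{-1}(x,\phi(x,|f|/\lambda))$ with $\lambda>\|f\|_\phi$ indeed gives $\|g\|_{\phi^*}\le1$ and $\int|fg|\gtrsim\lambda\int_\Omega\phi(x,|f|/\lambda)\,dx$, but the last factor has \emph{no} lower bound in this regime: all that the definition of the Luxemburg norm gives you for $\lambda>\|f\|_\phi$ is $\int_\Omega\phi(x,|f|/\lambda)\,dx\le1$, and it can be identically zero. The appeal to ``left-continuity of the modular in $\lambda$'' and ``the modular of $|f|/\lambda$ staying bounded below'' is not available for a general $\phi\in\Phiw$. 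A concrete counterexample is $\phi(x,t)=\infty\cdot\chi_{\{t>1\}}$ (so $L^\phi=L^\infty$, $\phi^*(t)=t$): here $\phi(x,|f|/\lambda)\equiv0$ for every $\lambda>\|f\|_\infty$, your $g$ is $(\phi^*)^{-1}(x,0)=0$ a.e., and $\int|fg|=0$, while the sup on the right-hand side equals $\|f\|_\infty$. The fix is to approach from \emph{below}: take $\lambda<\|f\|_\phi$, so that $\rho:=\int_\Omega\phi(x,|f|/\lambda)\,dx>1$, truncate $f$ to make $\rho$ finite, renormalise $g$ by $\rho$ (using convexity of $\phi^*$ and $\phi^*(0)=0$ to keep $\|g/\rho\|_{\phi^*}\le1$), which yields $\int|f\,g/\rho|\gtrsim\lambda$, and finally let $\lambda\uparrow\|f\|_\phi$ and the truncation parameter go to infinity. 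Without some version of this truncation-and-renormalisation step the lower bound is not established.
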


\begin{lem}[Proposition 4.4.11 in \cite{HarH_book}]\label{lem:norm-of-ball}
Let $\phi \in \Phiw(\Omega)$ satisfy \azero{}, \aone{} and \atwo{}. Then for every ball $B \subset \Omega$ with $|B| \leq 1$ and almost every $y \in B$ we have
\begin{align*}
\|\chi_{B}\|_{L^{\phi}(\Omega)} \approx \dfrac{1}{\phi^{-1}\left (y, \frac{1}{|B|}\right )}.
\end{align*}
\end{lem}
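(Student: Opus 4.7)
The plan is to compute $\|\chi_B\|_\phi$ directly from the Luxemburg definition and match it against the candidate value $1/\tau_0$, where $\tau_0 := \phi^{-1}(y, 1/|B|)$. Fix a representative $y\in B$ outside the exceptional null sets from \azero{} and \aone{}. Since $|B|\le 1$, Remark~\ref{rem:assumptions}(b) gives $\phi(y,\beta)\le 1\le 1/|B|$, whence $\tau_0\ge\beta>0$, and $\tau_0<\infty$ because $\phi(y,t)\to\infty$. Invoking \aone{} at the admissible value $t=1/|B|\in[1,1/|B|]$ yields the uniform sandwich
\[
\beta\,\phi^{-1}(x, 1/|B|) \;\le\; \tau_0 \;\le\; \frac{1}{\beta}\,\phi^{-1}(x, 1/|B|) \qquad \text{for a.e.\ } x\in B,
\]
which supplies all the spatial control needed.

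For the upper bound I would try $\lambda := 2/(\beta\tau_0)$, so that $1/\lambda = \beta\tau_0/2 < \beta\tau_0 \le \phi^{-1}(x, 1/|B|)$ for a.e.\ $x\in B$. Any $\tau$ strictly below the infimum $\phi^{-1}(x, 1/|B|)$ satisfies $\phi(x,\tau)<1/|B|$ by the definition of the generalized inverse, so $\int_B \phi(x, 1/\lambda)\,dx \le 1$ and $\lambda$ is admissible in the Luxemburg infimum, yielding $\|\chi_B\|_\phi \le 2/(\beta\tau_0)\lesssim 1/\tau_0$.

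For the lower bound let $L\ge 1$ be the constant witnessing the almost-increasingness of $t\mapsto \phi(x,t)/t$ (the third axiom of $\Phiw$), and set $\lambda := \beta/(4L\tau_0)$. For each small $\epsilon>0$ and a.e.\ $x\in B$, the shifted point $s:=\phi^{-1}(x, 1/|B|)+\epsilon$ lies strictly above the infimum, forcing $\phi(x,s)\ge 1/|B|$; the sandwich also gives $s\le \tau_0/\beta + \epsilon \le 1/\lambda = 4L\tau_0/\beta$ once $\epsilon$ is small. Almost-increasingness applied to $s\le 1/\lambda$ then produces
\[
\phi(x, 1/\lambda) \;\ge\; \frac{1}{sL\lambda}\,\phi(x,s) \;\ge\; \frac{1}{sL\lambda\,|B|},
\]
and letting $\epsilon\to 0^+$ together with $\phi^{-1}(x, 1/|B|)\le \tau_0/\beta$ collapses this to $\phi(x, 1/\lambda)\ge 4/|B|$ a.e.\ in $B$. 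Integrating gives $\int_B \phi(x,1/\lambda)\,dx\ge 4>1$, so $\lambda$ is not admissible and $\|\chi_B\|_\phi\ge \beta/(4L\tau_0)\gtrsim 1/\tau_0$.

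The main subtlety is that for a general weak $\Phi$-function the identity $\phi(x,\phi^{-1}(x,s))=s$ may fail (only Lemma~\ref{lem:composition}(a)--(b) are at hand), which is precisely why both bounds are set up with strict inequalities — an $\epsilon$-shift in the lower bound — rather than evaluating $\phi$ at the infimum itself. Note also that \atwo{} is never explicitly invoked: the estimate is driven entirely by the spatial control in \aone{} and the almost-increasing axiom built into $\Phiw$, with \azero{} used only to keep $\tau_0$ uniformly bounded away from $0$.
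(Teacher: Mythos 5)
The paper cites this lemma from Proposition 4.4.11 of \cite{HarH_book} without reproducing its proof, so there is no in-paper argument to compare against; your proof must stand on its own, and it does. The strategy of directly exhibiting an admissible and an inadmissible $\lambda$ in the Luxemburg infimum is a clean, self-contained, and correct route: the upper bound follows purely from the definition of the generalized inverse (any $\tau$ strictly below $\phi^{-1}(x,1/|B|)$ has $\phi(x,\tau)<1/|B|$), the lower bound from the \ainc{1} axiom built into $\Phiw$ applied to an $\epsilon$-shifted argument (correctly avoiding the pitfall that $\phi(x,\phi^{-1}(x,s))=s$ can fail outside $\Phis$), and the spatial uniformity over $B$ is supplied by \aone{} evaluated at $t=1/|B|$. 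Your observation that \atwo{} is never invoked is also correct for this restricted statement with $|B|\le 1$; it is a local estimate and \azero{} together with \aone{} suffices.

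One small inaccuracy worth flagging: the inference ``$\phi(y,\beta)\le 1\le 1/|B|$, whence $\tau_0\ge\beta$'' does not quite follow in the boundary case $|B|=1$, since then it is possible that $\phi(y,\beta)=1=1/|B|$, so $\beta$ lies in the set whose infimum defines $\tau_0$ and $\tau_0$ may be strictly smaller than $\beta$. This does not affect the argument: all you actually use is $0<\tau_0<\infty$, and both bounds hold regardless ($\tau_0>0$ since $\phi(y,0^+)=0$, and $\tau_0<\infty$ since $\phi(y,t)\to\infty$; one can also derive the uniform lower bound $\tau_0\ge\beta^2/L$ from the \ainc{1} axiom if a quantitative floor is wanted). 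With that cosmetic correction the proof is sound, and yields $\beta/(4L\tau_0)\le\|\chi_B\|_\phi\le 2/(\beta\tau_0)$ with constants depending only on the \azero{}/\aone{} parameter $\beta$ and the $\Phiw$ constant $L$.
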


\section{Sufficient conditions}

Let $\Omega \subset \Rn$ be open, where $n \ge 1$. For every $f \in L^1_\loc(\Omega)$ we define the (non-centered Hardy-Littlewood) maximal function by
\[
Mf(x) := \sup_{B} \frac{1}{|B|} \int_{B \cap \Omega} |f(y)| \, dy,
\]
where the supremum is taken over all open balls $B$ containing the point $x$. 
 
The maximal function $M$ is bounded from $L^\phi(\Omega)$ to $L^\phi(\Omega)$ provided that $\phi \in \Phiw(\Omega)$ satisfies \azero{}, \aone{}, \atwo{} 
and \ainc{}, see \cite[Theorem 4.3.4]{HarH_book}. This was first proved by Hästö \cite{Has15} in $\Rn$, 
see also \cite{Die05, MaeMOS13, MaeMOS13b, OhnS14}.
Let us first show that \ainc{} is not necessary for the boundedness of the maximal function. 

\begin{thm}\label{thm:bounded}
Assume that $\phi \in \Phiw(\Omega)$ satisfies \azero{}, \aone{}, \atwo{} and \ainci{}. Assume that there exists $G \subset \Omega$ such that $0\le |G| < \infty$ and $\phi$ satisfies \ainc{p} for some $p>1$ on $\Omega\setminus G$. 
Then $M:L^{\phi}(\Omega) \to L^{\phi}(\Omega)$ is bounded.  
\end{thm}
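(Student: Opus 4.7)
I would reduce the theorem to the known result \cite[Theorem~4.3.4]{HarH_book}, which asserts that the maximal operator is bounded on $L^\psi(\Omega)$ whenever $\psi\in\Phiw(\Omega)$ satisfies \azero{}, \aone{}, \atwo{} and \ainc{} unconditionally. Concretely, I would build a generalized $\Phi$-function $\psi$ that is weakly equivalent to $\phi$ and satisfies all four conditions on the whole range of $t$. Since the paper records that weakly equivalent $\Phi$-functions give rise to the same Orlicz space with comparable norms, the boundedness on $L^\psi(\Omega)$ will transfer to $L^\phi(\Omega)$.

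\textbf{The construction.} Let $p_1>1$ be such that \ainc{p_1} holds on $(\Omega\setminus G)\times(0,\infty)$ and let $p_2>1$ be such that \ainci{p_2} holds from some $t_0>0$. Since \ainc{q} implies \ainc{p} on the same range whenever $q\ge p>1$ (multiply by $(t/s)^{q-p}\le 1$), I may replace both exponents by $p:=\min(p_1,p_2)>1$. Shrinking $t_0$ via Lemma~\ref{lem:ainc-for-smaller}(a), I may further assume $t_0\le\beta$, where $\beta$ comes from \azero{}, so that $\phi(x,t_0)\le 1$ almost everywhere. Then define
\[
\psi(x,t):=\begin{cases}\phi(x,t), & x\in\Omega\setminus G\ \text{or}\ t\ge t_0,\\[2pt]\phi(x,t_0)(t/t_0)^p, & x\in G\ \text{and}\ 0\le t<t_0.\end{cases}
\]
On $G\times[0,t_0]$ both $\phi$ and $\psi$ lie in $[0,1]$, so $|\psi-\phi|\le\chi_G$ pointwise. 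Because $|G|<\infty$, the function $h:=\chi_G$ lies in $L^1(\Omega)\cap L^\infty(\Omega)$; taking $L=1$ immediately yields $\phi\sim\psi$.

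\textbf{Verifying the hypotheses for $\psi$.} That $\psi\in\Phiw(\Omega)$ is straightforward: the power branch glues continuously at $t_0$, and $t\mapsto\psi(x,t)/t$ is a genuine power $t^{p-1}$ (times a constant) on $(0,t_0]$ for $x\in G$, so almost-increasingness is preserved. Condition \ainc{p} for $\psi$ on $G$ breaks into three subcases for $s<t$ relative to $t_0$: on $\{s<t\le t_0\}$ the ratio $\psi(x,\cdot)/(\cdot)^p$ is constant, equal to $\phi(x,t_0)/t_0^p$; on $\{t_0\le s<t\}$ the assertion is \ainci{p} of $\phi$; and on $\{s<t_0\le t\}$ one applies \ainci{p} of $\phi$ to the pair $(t_0,t)$. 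Condition \azero{} passes to $\psi$ with the same constant $\beta$, because $t_0\le\beta$ forces $\psi(x,\beta)=\phi(x,\beta)$ and $\psi(x,1/\beta)=\phi(x,1/\beta)$. Condition \atwo{} transfers by absorbing $\chi_G$ into the control function: if $h_0\in L^1\cap L^\infty$ works for $\phi$, then $h_1:=h_0+\chi_G$ works for $\psi$, since $|\psi-\phi|\le\chi_G$.

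\textbf{Main obstacle and conclusion.} The most delicate check is \aone{} for $\psi$, but the cut $t_0\le\beta$ resolves it. Indeed, for $t\ge 1$ and $x\in G$, the infimum defining $\psi^{-1}(x,t)$ cannot be realised by $\tau\le t_0$, because there $\psi(x,\tau)\le\phi(x,t_0)\le 1\le t$ (with strict inequality for $t>1$). Hence $\psi^{-1}(x,t)$ is attained in the region $\tau>t_0$ where $\psi$ coincides with $\phi$, giving $\psi^{-1}(x,t)=\phi^{-1}(x,t)$ for all $t\in[1,1/|B|]$. Thus \aone{} transfers verbatim from $\phi$ to $\psi$. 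With \azero{}, \aone{}, \atwo{} and \ainc{} all established for $\psi$, \cite[Theorem~4.3.4]{HarH_book} gives the boundedness of $M$ on $L^\psi(\Omega)$, and weak equivalence closes the argument.
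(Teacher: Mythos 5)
Your proposal is correct and takes essentially the same route as the paper: in both cases one builds a modified weak $\Phi$-function that agrees with $\phi$ for large $t$ and on $\Omega\setminus G$, overwrites the small-$t$ branch on $G$ with something that is genuinely a power of order $p$, verifies \azero{}, \aone{}, \atwo{}, \ainc{} for the modification, invokes \cite[Theorem~4.3.4]{HarH_book}, and transfers back via weak equivalence (the error being $\chi_G\in L^1\cap L^\infty$ since $|G|<\infty$). The only real difference is cosmetic: the paper first replaces $\phi$ by an equivalent strong $\Phi$-function $\psi_1$ normalized by $\psi_1(x,1)=1$ and then uses $\psi_2:=\psi_1^p$ on $G\times[0,1]$, which makes the glue point and the \azero{}/\atwo{} bookkeeping automatic; you skip the normalization and instead graft the explicit power $\phi(x,t_0)(t/t_0)^p$ at a cut $t_0\le\beta$, which buys directness at the cost of a slightly more delicate \aone{} check. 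On that point, your claim that $\psi^{-1}(x,t)=\phi^{-1}(x,t)$ for all $t\in[1,1/|B|]$ is clean for $t>1$ but needs a word of care at $t=1$ (if $\phi(x,t_0)=1$ the two inverses can disagree); this is harmless for \aone{} but it would be cleaner, as in the paper, to verify \aonep{} directly and then use \azero{}$+$\aonep{}$\Rightarrow$\aone{} from Remark~\ref{rem:assumptions}(c). Everything else — the reduction of the two exponents to $p=\min(p_1,p_2)$, the three-case check of \ainc{p} around $t_0$, and the \atwo{} transfer with $h_1=h_0+\chi_G$ — is sound.
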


\begin{proof}Since $\phi$ satisfies \azero{}, it is equivalent with $\psi_1 \in \Phis(\Omega)$ with $\psi_1(x, 1) =1$ for almost every $x$ by \cite[Lemma 3.7.3]{HarH_book}. Thus we need to show that $M: L^{\psi_1}(\Omega) \to L^{\psi_1}(\Omega)$ is bounded.

Since $\phi\simeq \psi_1$, we obtain that $\psi_1$ satisfies \azero{}, \aone{}, \atwo{}, \ainci{} in $\Rn$ and \ainc{} in $\Omega\setminus G$. By Lemma~\ref{lem:ainc-for-smaller} we may assume that  \ainc{p} for some $p>1$ holds for all $t \ge 1$ and almost all $x$.

Let us define $\psi_2$ by
\[
\psi_2(x, t) := \begin{cases}
\psi_1(x,t)^p, &\text{ if } x \in G, t \in [0, 1];\\
\psi_1(x, t), &\text{ if } x \in G, t >1;\\
\psi_1(x, t), &\text{ if } x \in \Omega\setminus G.
\end{cases} 
\]
A short calculation show that $\psi_2 \in \Phiw(\Omega)$.
Since $\psi_2 (x, 1)=1$, it satisfies \azero{}. 

We will show that $\psi_2$ satisfies \aonep{}, which together with \azero{} yields \aone{}. 
So let us assume that  $\psi_2(y, t) \in [1, 1/|B|]$ and $x, y \in B\cap \Omega$.
Let $\beta$ be from \aonep{} of $\psi_1$. If $\beta t> 1$, then \aonep{} follows from the definition and \aonep{} of $\psi_1$. If $\beta t \le 1$, then $\psi_2(x, \beta t) \le \psi_2(x, 1) = 1 \le \psi_2 (y, t)$.

Let us then study \atwo{}.
Let $\psi_\infty$ and  $h_1\in L^1(\Omega)\cap L^\infty(\Omega)$ be from \atwo{} of $\psi_1$. Note that $\chi_G \in L^1(\Omega)\cap L^\infty(\Omega)$.
For all $t \in [0, 1]$  we have $\psi_1(x, t), \psi_2(x, t) \in [0, 1]$. We obtain for  $x \in G$ by \atwo{} of $\psi_1$ that 
\[
\psi_\infty(\beta t) \le \psi_1(x, t) + h_1(x) \le  h_1 (x) + \chi_G(x) \le \psi_2(x, t) + h_1 (x) + \chi_G(x)
\] 
and $\psi_2(x, \beta t) \le \chi_G(x) \le \psi_\infty(t) + \chi_G(x)$.
If $x \in \Omega \setminus G$, then 
\[
\psi_\infty(\beta t) \le \psi_1(x, t) + h_1(x) = \psi_2(x, t) + h_1(x)
\]
 and $\psi_2(x, \beta t)= \psi_1(x, \beta t) \le \psi_\infty(t) + h_1(x)$. Thus $\psi_2$ satisfies \atwo{} with $\psi_\infty$ 
 and $h_2 := h_1 + \chi_G$.

 Since $\psi_2(x, t) = \psi_1(x, t)$ for $t> 1$, we obtain that  $\psi_2$ satisfies \ainc{p}, $p>1$, for $t>1$ on  $\Omega$, 
and for $t>0$ on $\Omega\setminus G$.
It is also easy to see that we have \ainc{p} for $0<t \le 1$  on $G$. 
Then by continuity of $t \mapsto \psi_2(x, t)$ we obtain that $\psi_2$ satisfies \ainc{p} for $t>0$ on $\Omega$. Thus by \cite[Theorem 4.3.4]{HarH_book}
$M: L^{\psi_2}(\Omega) \to L^{\psi_2}(\Omega)$ is bounded.

Let us finally show that $L^{\psi_1}(\Omega) = L^{\psi_2}(\Omega)$ with comparable norm, since this yields that 
$M: L^{\psi_1}(\Omega) \to L^{\psi_1}(\Omega)$ is bounded. For $t\ge 1$, $\psi_1(x, t) = \psi_2(x, t)$. For $t \in[0, 1]$ we immediately obtain  that
\[
\psi_1(x, t) \le \psi_2(x, t) + \chi_G(x) \quad \text{and} \quad  \psi_2(x, t) \le \psi_1(x, t) + \chi_G(x).
\]
Thus $\psi_1$ and $\psi_2$ are weakly equivalent, and by \cite[Corollary 3.2.7]{HarH_book} $L^{\psi_1}(\Omega) = L^{\psi_2}(\Omega)$ with comparable norms.
\end{proof}

Next we give an example of a generalized $\Phi$-function that satisfies the assumptions of Theorem~\ref{thm:bounded}.

\begin{eg}\label{eg:phi-without-ainc}
Let
\begin{align*}
G:=\left \{(x,y) : x \geq 1 \text{ and } -\frac{1}{x^2}\leq y \leq \frac{1}{x^2} \right \}.
\end{align*} 
It is clear that the set $G$ is unbounded and $|G| = 2$.
Let us define $\phi: \R^2 \times[0, \infty) \to [0, \infty)$ by
\[
\phi(x, t) :=
\begin{cases}
t, &\text{ if } x \in G \text{ and }t \in [0, 1];\\
t^2, &\text{ if } x \in G \text{ and } t>1;\\
t^2 , &\text{ if } x \not\in G. 
\end{cases}
\]
A short calculation gives  $\phi \in \Phi_s(\R^2)$.
 Note that  $\phi$ is not an N-function since  $\lim_{t \to 0} \frac{\phi(x, t)}{t}=1 \neq 0$ for $x \in G$. 
We will show that
\begin{itemize}
\item[(a)] $\phi$ satisfies \azero{}, \aone{}, \atwo{} and \ainci{};
\item[(b)] $\phi$ does not satisfy \ainc{} and it fails on an unbounded set with a finite measure;
\item[(c)] $M:L^\phi (\R^2) \to L^\phi (\R^2)$ is bounded.
\end{itemize}

(a) \azero{} is clear since $\phi(x, 1) =1$ for all $x\in \R^2$. For all $\phi(x, t) \ge 1$, i.e. for all $t\ge 1$, and all $x, y \in \R^2$ we have $\phi(x, t) = \phi (y, t)$, and thus \aonep{} holds.  These together yield \aone{}.

Let us define that $\phi_\infty(t) := t^2$ and  $h:= \chi_G$. Then $h \in L^1(\R^2) \cap L^\infty (\R^2)$, and 
\[
\phi_\infty (t) \le \phi(x, t) \quad \text{and} \quad \phi(x, t) \le \phi_\infty(t) + h(x)
\]
for all $x \in \R^2$ and all $t \in [0, 1]$, and thus especially if $\phi(x, t), \phi_\infty(t) \in [0, 1]$. Thus \atwo{} holds with $\beta = 1$.
\ainci{2} is clear for $t>1$.

(b) $\phi$ does not satisfy \ainc{} for $t \in [0, 1]$ on $G$, since for all $p>1$ we have $\lim_{t\to 0^+}\frac{\phi(x, t)}{t^p}=\infty$.   

(c) Now the boundedness of the maximal function follows by Theorem~\ref{thm:bounded}.
\end{eg}

In a bounded domain \atwo{} is trivially satisfied. By choosing that the exceptional set $G$ is the whole bounded domain, we obtain by Theorem~\ref{thm:bounded} the following corollary. 

\begin{cor}\label{cor:ainci-bounded}
Let $\Omega \subset \Rn$ be bounded. Assume that $\phi \in \Phiw(\Omega)$ satisfies \azero{}, \aone{} and \ainci{}.
Then $M: L^\phi(\Omega) \to L^\phi(\Omega)$ is bounded.
\end{cor}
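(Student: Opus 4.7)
The proof will be a direct reduction to Theorem \ref{thm:bounded}. The observation driving it is that when $\Omega$ is bounded, the set $G=\Omega$ itself is a legitimate exceptional set: it has finite measure since $|G|=|\Omega|<\infty$, and $\Omega\setminus G=\emptyset$, so the requirement that $\phi$ satisfy \ainc{p} on $\Omega\setminus G$ is vacuous (for any choice of $p>1$). Consequently the hypotheses of Theorem \ref{thm:bounded} reduce exactly to \azero{}, \aone{}, \atwo{} and \ainci{}, with only \atwo{} missing from the statement of the corollary.

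The step that needs justification is therefore verifying that \atwo{} holds automatically on a bounded domain. This is standard for generalized $\Phi$-functions satisfying \azero{}: by Remark \ref{rem:assumptions}(e) we may take $s=1$ in the definition of \atwo{}, and the bound $t\le \phi_\infty^{-1}(1)$ on the range where $\phi_\infty(t)\in[0,1]$ together with a suitable choice $\phi_\infty(t)=t$ (or any fixed element of $\Phiw$) allows the spatial variation of $\phi$ in the required inequalities to be absorbed into a function $h$ which is a constant multiple of $\chi_\Omega$. Since $\Omega$ is bounded, this $h$ lies in $L^1(\Omega)\cap L^\infty(\Omega)$, so \atwo{} holds. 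In the write-up I would simply invoke the corresponding statement from \cite{HarH_book} rather than redo this computation.

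Having established \atwo{}, the conclusion is immediate: apply Theorem \ref{thm:bounded} with $G=\Omega$. The only mild obstacle is bookkeeping, namely noting that \ainc{p} is vacuously true on the empty set so that no further assumption beyond \ainci{} is required, and confirming that boundedness of $\Omega$ does indeed supply \atwo{} without any extra hypothesis on $\phi$.
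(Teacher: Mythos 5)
Your proposal is correct and takes essentially the same route as the paper: the paper notes in the surrounding text that \atwo{} holds trivially on a bounded domain and then applies Theorem~\ref{thm:bounded} with $G=\Omega$, making the \ainc{p}-on-$\Omega\setminus G$ hypothesis vacuous. The only minor caveat is that your sketched choice $\phi_\infty(t)=t$ is not the cleanest way to exhibit \atwo{} (one must take care matching the ranges dictated by the \azero{} constant of $\phi$), but since you defer to the standard reference for that fact, nothing is missing.
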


The next example shows that  if the exceptional set $G$  has an infinite measure, then the maximal function is not necessarily bounded.
 More precisely we give a generalized $\Phi$-function $\phi$ that satisfies \azero{}, \aone{}, \atwo{} and \ainci{}, and show that $M:L^{\phi}(\Rn) \to L^{\phi}(\Rn)$ is not bounded. Here \ainc{}  fails for every $x \in \Rn$. The key difference to previous example is that now the asymptote $\phi_\infty$ has linear growth.
 
\begin{eg}\label{eg:infinite-G}
Suppose that the dimension $n$ is at least $2$. Let us define 
\[
\phi(x, t) := 
\begin{cases}
 t, & \text{if } 0\le t \le \frac1{|x|+2}; \\
t^2 +t, & \text{if }  t >  \frac1{|x|+2}.
\end{cases}
\]
Now we have
\[
\frac{\phi(x, t)}{t} =
\begin{cases}
 1, & \text{if } 0\le t \le \frac1{|x|+2}; \\
t +1, & \text{if }  t >  \frac1{|x|+2}.
\end{cases}
\]
and hence $\phi$ satisfies \inc{1}. The other conditions of generalized $\Phi$-function are clear, and thus  $\phi \in \Phiw(\R^n)$. Moreover $\phi$ is left-continuous, but not convex.

$\phi$ satisfies \azero{} since $\phi(x, 1) = 2$ for all $x \in \Rn$.
If $\phi(y, t) \ge 1$, then $\phi(y, t) = t + t^2$, and hence $\phi(x, \beta t) \le t + t^2 = \phi(y, t)$ for all $x, y \in \Rn$. Thus \aonep{} holds, and this together \azero{} yield \aone{}.

Let $\phi_\infty (t) :=t$ and $h \equiv 0$. Then 
\[
\phi_\infty (t) \le \phi(x, t) \text{ and } \phi\Big(x, \frac{t}2\Big)  \le \frac{t^2}4 + \frac{t}2\le \frac{t}4 + \frac{t}2 \le \phi_\infty (t) 
\]
for all $t\in[0, 1]$. Therefore these inequalities are especially satisfied for $t$ such that $\phi(x,t), \phi_{\infty}(t) \in [0,1]$ for all $x$ and thus \atwo{} holds with $\beta =\frac{1}{2}$. 

Now we have verified that $\phi \in \Phiw(\Omega)$ satisfies \azero{}, \aone{} and \atwo{}. Since
\[
\frac{\phi(x, t)}{t^2} \le 1 + \frac1t \le 2 \le 2\Big( \frac{s^2+s}{s^2}\Big)= 2 \frac{\phi(x, s)}{s^2}
\]
for $1\le t<s$, we find that $\phi$ satisfies \ainci{2} with a constant $2$.
Similarly we can show that  $\phi$ satisfies \adec{2} for $t>0$. Since for every $p>1$ and every  $x \in \Rn$, we have
\[
\lim_{t \to 0^+} \frac{\phi(x, t)}{t^p} = \lim_{t \to 0^+} \frac{t}{t^p}=\infty,
\]
$\phi$ does not satisfy \ainc{}.

Let us choose $f(x) := \chi_{B(0,1)}(x)$,  and note that $f \in L^\phi(\Rn)$ due to \azero{}. Standard calculations yield $Mf(x) \approx \frac{1}{(|x|+1)^n}$. As $n>1$, we know that $|x|^n$ grows faster than $|x|$ and so we obtain a radius $R$ such that $\frac{c}{(|x|+1)^n} \le \frac1{|x|+2}$ for all $x \in \Rn\setminus B(0, R)$. Therefore, for such $x$, the maximal function is small enough to guarantee $\phi(x, Mf(x)) = \frac{c}{(|x|+1)^n}$ but large enough that
\[
\int_\Rn \phi(x,  Mf(x)) \, dx  \ge \int_{\Rn\setminus B(0, R)}\frac{c}{(|x|+1)^n}\, dx = \infty.
\] 
Since $\phi$ satisfies \adec{2} this yields that  $Mf \not \in L^\phi(\Rn)$ due to Lemma \ref{lem:adec-modular}.
\end{eg}

The next example shows that in some cases the maximal function is bounded even if the exceptional set $G$  has an infinite measure.

\begin{eg}\label{eg:M-bdd-infinite-measure}
Let us define 
\[
\phi(x, t) := 
\begin{cases}
\frac{t}{(|x|+1)^n}, & \text{if } 0\le t \le \frac1{(|x|+1)^n}; \\
t^2, & \text{if }  t >  \frac1{(|x|+1)^n}.
\end{cases}
\]
Now we have
\[
\frac{\phi(x, t)}{t} =
\begin{cases}
 \frac{1}{(|x|+1)^n}, & \text{if } 0\le t \le \frac1{(|x|+1)^n} \\
t, & \text{if }  t >  \frac1{(|x|+1)^n}.
\end{cases}
\]
and hence $\phi$ satisfies \inc{1}. The other conditions of generalized $\Phi$-function are clear, and thus  $\phi \in \Phiw(\R^n)$. Moreover $\phi$ is continuous, but not necessarily convex.

We have
\begin{equation}\label{equ:L-2}
t^2 \le \phi(x, t) \le t^2 + \frac1{(|x|+1)^{2n}}
\end{equation}
for every $t>0$ and $x \in \R^n$. Let us denote $h:= \frac1{(|x|+1)^{2n}}$, and note, by standard 
calculations, that $h \in L^1(\Rn) \cap L^\infty(\Rn)$. Hence we have $\phi \sim t^2$, and thus by 
\cite[Corollary 3.2.7]{HarH_book} $L^{\phi}(\Rn) = L^{2}(\Rn)$ with comparable norm. Hence $M:L^\phi (\Rn) \to L^\phi(\Rn)$ is bounded.

\azero{} holds since $\phi(x, 1)=1$ for all $x$. \aone{} holds since $\phi^{-1}(x, t) = t^{1/2}$ is independent of $x$ for all $t\ge 1$. \atwo{} holds by \eqref{equ:L-2} with $\phi_\infty(t) :=t^2$ and $h$.  \ainc{} fails since for all $p>1$ and all $x$ we have $\lim_{t\to 0^+} \frac{\phi(x, t)}{t^p}=\infty$. 
\end{eg}

\section{Necessary conditions}

Our first main result shows that boundedness of maximal function under assumptions \azero{}, \aone{} and \atwo{} 
implies that $\phi$ satisfies \ainci{}.  The proof is a generalization of the proof by Gallardo \cite{Gal88} and it is   
based on estimates of norms of characteristic 
functions in suitable balls and annuli by means of the conjugate $\Phi$-function. The proof fails for small
$t$ since there is no way to estimate $\phi(x,t)$ by $\phi(y,t)$ for $t \leq 1$ with \aone{}.

Let us write $\Omega_\ve := \{x \in \Omega: \dist (x, \Rn\setminus \Omega)>\ve\}$.

\begin{thm}\label{thm:ainc-infty}
Let $\phi \in \Phiw(\Omega)$ satisfy \azero{}, \aone{} and \atwo{},  and suppose that the maximal function is bounded from $L^{\phi}(\Omega)$ to $L^{\phi}(\Omega)$. Then for every $\ve >0$ we obtain that $\phi|_{\Omega_\ve}$ satisfies \ainci{}.
\end{thm}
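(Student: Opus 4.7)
The plan is to adapt Gallardo's classical Orlicz argument \cite{Gal88} to the generalized setting. By Lemma \ref{lem:Sun-Sig}, it suffices to produce $c > 1$ and $t_0 > 0$ such that $2c\,\phi(y, t) \le \phi(y, ct)$ for almost every $y \in \Omega_\ve$ and every $t \ge t_0$. Fix such a $y$, let $\tau$ be large, and choose $r = r(\tau)$ so that $B := B(y, r) \subset \Omega_\ve$ satisfies $|B| = 1/\phi(y, \tau)$; for $\tau$ above a threshold fixed by \azero{}, the radius $r$ is smaller than $\ve$. By Lemma~\ref{lem:norm-of-ball} together with \aonep{} applied on $B$ itself (whose range $[1, 1/|B|]$ is exactly matched by our choice of $r$), the test function $f := \tau \chi_B$ satisfies $\|f\|_{L^\phi(\Omega)} \approx 1$.

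The crucial geometric input is the polynomial-decay lower bound
\[
M\chi_B(x) \ge \frac{|B|}{|B(x, |x-y|+r)|} \gtrsim \Bigl(\frac{r}{|x-y|}\Bigr)^n, \quad |x-y|>r,
\]
which captures all scales simultaneously, in contrast to the crude $2B$-trick that only gives \inc{1}. Inserting this into the modular form of $\|Mf\|_\phi \lesssim \|f\|_\phi$ yields
\[
\int_{|x-y|>r} \phi\bigl(x, c\tau\,(r/|x-y|)^n\bigr)\,dx \lesssim 1.
\]
To eliminate the $x$-dependence, I use \atwo{} (with $s=1$ by Remark~\ref{rem:assumptions}(e)) on the region $|x-y| \ge r(c\tau)^{1/n}$ where the argument is small enough to meet the range condition, and \aonep{} applied on a dyadic decomposition of the annulus $r < |x-y| < r(c\tau)^{1/n}$ to replace $\phi(x, \cdot)$ by $\phi(y, \cdot)$; the error from $h \in L^1 \cap L^\infty$ is absorbed. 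A polar-coordinate substitution $v = c'\tau (r/|x-y|)^n$, combined with $r^n = 1/\phi(y,\tau) \approx 1/\phi_\infty(\tau)$, transforms the estimate into the scale-intrinsic inequality
\[
\int_0^{c'\tau} \frac{\phi_\infty(v)}{v^2}\,dv \;\lesssim\; \frac{\phi_\infty(\tau)}{\tau}.
\]

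The final step converts this integral inequality into \ainci{}. Since $\phi_\infty(v)/v$ is non-decreasing, for any $0 < s < T$
\[
\int_s^T \frac{\phi_\infty(v)}{v^2}\,dv \;\ge\; \frac{\phi_\infty(s)}{s}\,\log(T/s),
\]
which, combined with the upper bound, gives $\phi_\infty(T)/T \gtrsim (\log(T/s)/C')\cdot \phi_\infty(s)/s$. Iterating along a geometric sequence $T = A^k s$ with $A$ chosen so that $A\log A/C' > A$ (i.e.\ $A > e^{C'}$) produces $\phi_\infty(A^k s)/\phi_\infty(s) \gtrsim (A^k)^{q}$ for some $q > 1$, which is \ainci{q} for $\phi_\infty$. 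Transferring back through \atwo{} gives the required \ainci{} for $\phi|_{\Omega_\ve}$.

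The main obstacle is precisely this last bootstrap: every naive combination of boundedness with the characteristic-function estimates of Lemma~\ref{lem:norm-of-ball} yields only \inc{1}-type (linear) bounds, and the superlinear exponent $p>1$ must be extracted from the $\log(T/s)$ factor that appears only because $\int dv/v$ diverges at both endpoints. Secondary difficulties are preserving the range restrictions in \aonep{} and \atwo{} throughout the replacement of $\phi(x,\cdot)$ by $\phi_\infty$ or $\phi(y,\cdot)$, and checking that the lower endpoint of the integral after substitution vanishes at the correct rate as $\tau \to \infty$ (the borderline case where it fails to vanish is itself handled, since the integrand $\phi_\infty(v)/v^2$ for linear $\phi_\infty$ already generates a divergent logarithm).
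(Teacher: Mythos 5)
Your overall strategy is a genuinely different route from the paper's: the paper tests the boundedness assumption by \emph{duality}, plugging a test function of the form $(\phi^{\ast})^{-1}(\cdot,t)\chi_{\text{annulus}}$ into the norm conjugate formula (Lemma~\ref{lem:norm-conjugate}), comparing with the characteristic-function norm estimate of Lemma~\ref{lem:norm-of-ball}, and working with $\phi^{-1}$ and $(\phi^\ast)^{-1}$ throughout; only at the very end does it substitute $t = \phi(y,t')$ and invoke Lemma~\ref{lem:Sun-Sig}. You instead work on the \emph{modular} side with the direct test function $\tau\chi_B$, use the polynomial decay of $M\chi_B$, strip the $x$-dependence via \aonep{}/\atwo{}, and extract the superlinear exponent from a logarithmic integral bootstrap. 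Both proofs ultimately rely on the same logarithmic divergence over annuli, but your framework is more elementary and avoids the conjugate function entirely. This route can be made to work, and it is interesting that it does.

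However, there is a genuine error in the penultimate step that propagates into the conclusion. After applying \aonep{} on the near annuli $r < |x-y| \lesssim r\tau^{1/n}$, the integrand becomes $\phi(y,\cdot)$, \emph{not} $\phi_\infty(\cdot)$; the asymptote $\phi_\infty$ only appears on the far region where the argument is $\lesssim 1$. Moreover $r^n = 1/(\omega_n\phi(y,\tau))$ exactly, and the identification $\phi(y,\tau)\approx\phi_\infty(\tau)$ used in your change of variables is simply false for large $\tau$ — \atwo{} constrains $\phi$ and $\phi_\infty$ only where both are $\le 1$ (Remark~\ref{rem:assumptions}(e)), and gives no relation at all between them at large arguments. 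So the correct outcome of your derivation is an inequality of the form
\[
\int_{c_0}^{c_1\tau} \frac{\phi(y,v)}{v^2}\,dv \;\lesssim\; \frac{\phi(y,\tau)}{\tau},
\]
with $\phi(y,\cdot)$ on both sides and the lower endpoint at a fixed constant $c_0\gtrsim 1$ (so there is no ``divergence at the lower endpoint'' to exploit — the $\log$ factor comes entirely from the width $\tau$ of the window). This then feeds your bootstrap and yields the Sun--Shieh condition $2c\,\phi(y,s)\le\phi(y,cs)$ for $s\ge t_0$ \emph{directly for $\phi(y,\cdot)$}, which is exactly what Lemma~\ref{lem:Sun-Sig} needs; the detour through $\phi_\infty$ is not just unnecessary, the final step ``\ainci{} for $\phi_\infty$ $\Rightarrow$ \ainci{} for $\phi$ via \atwo{}'' is wrong, because \atwo{} says nothing about the large-$t$ regime where \ainci{} lives.

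Two further points that your sketch does not address but that are not trivial. First, the exceptional null sets: \aone{}/\aonep{} have a ball-dependent null set, so when you cover $\Omega_\ve$ and let $\tau\to\infty$ you must take a countable union of these to obtain a single null set valid for all $t\ge t_0$; the paper spends a full paragraph on this. Second, the case $\phi(y,\tau)=\infty$ (so that $r=0$) must be treated separately. Neither of these is fatal, but both need to be written out.
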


By choosing $\Omega = \Rn$ in the theorem, we obtain that $\phi$ satisfies \ainci{}.

\begin{proof}
Assume first that $\phi \in \Phis(\Omega)$.
By assumption $\|Mf\|_{\phi} \leq K \|f\|_{\phi}$ for some $K\geq 1$ and all $f \in L^\phi (\Omega)$. 
Let  $t \ge 1$ and $s >1$, define $\omega_n := |B(0,1)|$ and let $ B^{t}(x_0):=  B\Big (x_0, \big (\frac{1}{\omega_n t }\big )^{1/n}\Big )$. A direct calculation yields
\begin{align}\label{eq:measures_of_E}
|B^{t}(x_0)| = \omega_n \left (\dfrac{1}{(\omega_n t)^{1/n}}\right )^{n} = \dfrac{1}{t}.
\end{align}
Let us choose $t_0\ge 1$ so that $3B^{t_0} (x_0) \subset \Omega$ for all $x_0 \in \Omega_\ve$. For now on assume that $t\ge t_0$.

We also have the inclusion $B^{ts}(x_0) \subset B(x, 2|x-x_0|)$ for all $x  \in B^t(x_0) \setminus B^{ts}(x_0)$. Therefore, if we choose the ball $B(x, 2|x-x_0|)$ in the maximal function, we get a pointwise estimate
\begin{align}
\begin{split}\label{eq:M-chi-pointwise}
M \chi_{B^{ts}}(x) &= \sup_{r>0} \fint_{B(x,r)} \chi_{B^{ts}}(y) \, dy \geq \dfrac{1}{|B(x,2|x-x_0|)|} \int_{B(x,2|x-x_0|)} \chi_{B^{ts}}(y) \, dy \\
&= \dfrac{1}{2^{n} \omega_n |x-x_0|^{n}} |B^{ts}| = \dfrac{1}{2^{n} \omega_n |x-x_0|^{n} ts}
\end{split}
\end{align}
for  $x  \in B^t(x_0) \setminus B^{ts}(x_0)$.

Choose $g(x) := \left (\phi^{\ast}\right )^{-1}(x, t) \chi_{B^{t}(x_0)\setminus B^{ts}(x_0)}(x)$,  and note that $\|g\|_{\phi^{\ast}} \le 1$ because
\begin{align*}
\int_{B^t(x_0) \setminus B^{ts}(x_0)} \phi^{\ast} \left (x, (\phi^\ast)^{-1}(x, t)\right ) \, dx \le \int_{B^t(x_0)}  t \, dx = |B^t(x_0)| \cdot t = 1.
\end{align*}
Note that this also yields that $\left (\phi^{\ast}\right )^{-1}(x, t)$ is finite for all $t> t_0$ and almost all $x \in B^{t}(x_0)\setminus B^{ts}(x_0)$.
Since $\|g\|_{\phi^{\ast}} \le 1$ we use $g$ as a test function for the $\phi$-norm of $M\chi_{B^{ts}}$ in the Norm conjugate formula, Lemma \ref{lem:norm-conjugate}. Thus
\begin{align*}
\|M \chi_{B^{ts}(x_0)}\|_{\phi} \gtrsim \int_{\Omega} M \chi_{B^{ts}(x_0)}(x) g(x) \, dx = \int_{B^t(x_0)\setminus B^{ts}(x_0) }  M \chi_{B^{ts}(x_0)}(x) \left (\phi^{\ast}\right )^{-1}(x, t) \, dx.
\end{align*}
Since $\phi$ satisfies \aone{}, so does $\phi^{\ast}$, as stated in Remark \ref{rem:assumptions}. By \eqref{eq:measures_of_E} we have $1 \le t =\frac1{|B^{t}(x_0)|}$ and \aone{} yields 
\begin{equation}\label{equ:A1}
\beta \left (\phi^{\ast}\right )^{-1}(y,t) \le \left (\phi^{\ast}\right )^{-1}(x,t)
\end{equation}
for all $y \in B^{t}(x_0)\setminus N$, where $|N| =0$. This additionally confirms that $\left (\phi^{\ast}\right )^{-1}(y,t)$ is finite for almost every $y \in B^t(x_0)$. We continue the estimate with \eqref{eq:M-chi-pointwise} and obtain
\begin{align*}
\|M \chi_{B^{ts}(x_0)}\|_{\phi} &\geq \beta \left (\phi^{\ast}\right )^{-1}(y,t) \int_{B^t(x_0)\setminus B^{ts}(x_0)} M \chi_{B^{ts}(x_0)}(x) \, dx\\
& \geq \beta \left (\phi^{\ast}\right )^{-1}(y,t) \dfrac{1}{2^{n} \omega_n ts} \int_{B^t(x_0)\setminus B^{ts}(x_0)} |x-x_0|^{-n}\, dx \\
& =  c(n) \beta \left (\phi^{\ast}\right )^{-1}(y,t) \dfrac{1}{2^{n} \omega_n ts} \int_{\frac1{(\omega_n s t)^{1/n}}}^{\frac1{(\omega_n t)^{1/n}}} \rho^{-n} \rho^{n-1 }\, d \rho \approx \frac{\left (\phi^{\ast}\right )^{-1}(y,t)}{ts} \ln(s).
\end{align*}

On the other hand by Lemmata \ref{lem:norm-of-ball} and \ref{lem:conjugate-inverse} we can estimate the norm of the characteristic function of a ball $B^{ts}(x_0)$:
\begin{align*}
\|\chi_{B^{ts}(x_0)}\|_{\phi} &\lesssim \dfrac{1}{\phi^{-1}\left (y, \frac{1}{|B^{ts}(x_0)|}\right )} \lesssim \dfrac{\left (\phi^{\ast}\right )^{-1}(y, \frac{1}{|B^{ts}(x_0)|})}{\frac{1}{|B^{ts}(x_0)|}}  =  |B^{ts}(x_0)| \left (\phi^{\ast}\right )^{-1}\left (y, \frac{1}{|B^{ts}(x_0)|}\right )\\ 
&= \dfrac{\left (\phi^{\ast}\right )^{-1}(y,ts)}{ts}.
\end{align*}
We combine our estimates of the norms with the boundedness assumption $\|M \chi_{B^{ts}(x_0)}\|_{L^\phi(\Omega)} \leq K \|\chi_{B_{ts}}\|_{L^\phi(\Omega)}$ and  get
\begin{align*}
\dfrac{\left (\phi^{\ast}\right )^{-1}(y,t)}{ ts} \ln (s)  \lesssim  \dfrac{\left (\phi^{\ast}\right )^{-1}(y,ts)}{ts},
\end{align*}
for all $t>t_0$ and  all $y \in B^{t}(x_0)\setminus N$. Recall that the right hand side in the inequality was deduced to be finite.

By $\phi^{-1}(x,t) \left (\phi^{\ast}\right )^{-1}(x,t) \approx t$, Lemma~\ref{lem:conjugate-inverse}, we obtain
\begin{align*}
\phi^{-1}(y,ts) \ln(s) \le Cs \phi^{-1}(y,t)
\end{align*}
for all $t>t_0$ and all $y \in B^{t}(x_0)\setminus N$. Here the constant $C$ is independent of $t$.
Choose $s:= e^{2C}>1$ so the previous inequality becomes
\begin{align*}
2\phi^{-1}(y, e^{2C} t) \leq e^{2C} \phi^{-1}(y,t).
\end{align*}
Next we choose $t'$ such that $t_0 < t :=\phi(y,t')<\infty$. 
Note that since $\phi$ is a continuous $\Phi$-function this is possible for every $t>0$, and almost every $y$, where the exceptional set  $N'$ is independent of $t$.
The lower bound is satisfied by \azero{} and \ainc{1} provided that $t' \geq at_0 /\beta$:
\[
\frac{\phi(y,t')}{t'} \ge \frac1a \frac{\phi(y,1/\beta)}{1/\beta} \ge  \frac{\beta}{a}.
\] 
The situation where $\phi(y,t') = \infty$ is considered later. With the substitution we have
\begin{align*}
2\phi^{-1}(y,e^{2C}\phi(y,t')) \leq e^{2C} \phi^{-1}(y,\phi(y,t')) \leq e^{2C} t'
\end{align*}
where the last inequality follows from Lemma~\ref{lem:composition}(a). As $\phi(y,\cdot)$ is increasing, we apply it to the both side of the inequality to get
\begin{align*}
\phi(y,\phi^{-1}(y, e^{2C}\phi(y,t'))) \leq \phi\big(y,\tfrac12e^{2C}t'\big).
\end{align*}
Since $\phi \in \Phis(\Omega)$, this yields by Lemma~\ref{lem:composition} that
\begin{align}\label{equ:Sun-Sig}
 e^{2C} \phi(y,t') \le \phi\big(y, \tfrac12e^{2C} t'\big) 
\end{align} 
for all $t'>at_0/\beta$ with $\phi(y, t')<\infty$, and  all $y \in B^t(x_0)\setminus (N\cup N')$,
where $t= \phi(y, t')$ and $|N \cup N'|=0$.

Note that in \eqref{equ:Sun-Sig} the size of the ball, variable $t$, and $t'$ are connected by the equality $t= \phi(y, t')$, and thus when $t'$ grows the size of the ball shrinks and vice versa. However two observations help us. First the equality can be replaced by the inequality $t \ge \phi(y, t')$, and secondly \eqref{equ:Sun-Sig} holds trivially if the right hand side is infinity. Then we do this with details.

The inequality \eqref{equ:Sun-Sig} trivially holds also  for  all $y \in \Omega$ with $\phi(y,t')=\infty$.  
Recall that the constant $C$ is independent of $t$. The exceptional set in \aone{} is same for all $t$, with $1\le t \le \frac1{|B|}$. Moreover if $t_1<t_2$, then $B^{t_2}(x_0) \subset B^{t_1}(x_0)$, and thus  we actually have  for a fixed $t$ that  \eqref{equ:Sun-Sig} holds for 
  all $y \in B^t(x_0)\setminus (N\cup N')$ with $t'>at_0/\beta$, and $\phi(y, t') \le t$ or  $\phi(y, t') = \infty$, where $N$ is from \eqref{equ:A1} and $N'$ is from \eqref{equ:Sun-Sig}.  
 Since $\phi$ is continuous $t'$ can have all the values from the range $(at_0/\beta, \sup\{s: \phi(y, s)=t\})$.
 
For a fixed $t$ we cover $\Omega_\ve$   by finite number of balls $B^t(x_i) \Subset 3B^{t_0}(x_i) \subset \Omega$, $x_i \in \Omega_\ve$ and $i=1,2, 3, \ldots, k$, and obtain that \eqref{equ:Sun-Sig} holds for 
 all $y \in \Omega_\ve\setminus N_t$ with $t'>at_0/\beta$, and $\phi(y, t') \le t$, where $|N_t| =0$, and for all $y\in\Omega_\ve$ with  $\phi(y, t') = \infty$. 
Finally by taking a sequence $(t_i)$ with $t_i \to \infty$, we see that \eqref{equ:Sun-Sig} holds for all $t'>at_0/\beta$ and  all $y$ in 
\[
\bigcup_{i=1}^\infty \Big(\{y \in \Omega_\ve: \phi(y, t') \le t_i \}\setminus N_{t_i} \Big) \cup \{y \in \Omega_\ve: \phi(y, t')=\infty \} = \Omega_\ve \setminus \bigcup_{i=1}^\infty N_{t_i}.
\]
 Thus we have that \eqref{equ:Sun-Sig} holds for every  $t'>at_0/\beta$ and almost every $x$ and $y$.
 Now Lemma~\ref{lem:Sun-Sig} yields that $\phi|_{\Omega_\ve}$ satisfies \ainci{}. 
 
 Assume then that $\phi\in \Phiw(\Omega)$. Then by \cite[Theorem 2.2.3]{HarH_book} there exists $\psi \in \Phis(\Omega)$ such that $\phi \simeq \psi$. By the first part of the proof we see that $\psi$ satisfies \ainci{}, and thus by Lemma~\ref{lem:ainc-equivalent} $\phi$  satisfies \ainci{}. 
\end{proof}

Next result generalize Gallardo's result \cite{Gal88} from $N$-functions to all weak $\Phi$-functions.

\begin{cor}\label{cor:Orlicz-case}
Let $\phi \in \Phiw$ and $M: L^\phi(\Rn) \to L^\phi(\Rn)$ be bounded. Then $\phi$ satisfies \ainc{}.
\end{cor}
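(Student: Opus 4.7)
The strategy is to invoke Theorem~\ref{thm:ainc-infty} to obtain \ainci{p} and then, exploiting the absence of $x$-dependence, to extend the range of validity from large $t$ to all $t>0$.

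First I would check that a classical $\phi\in\Phiw$ automatically satisfies the three assumptions of Theorem~\ref{thm:ainc-infty}. Condition \aone{} is trivial with $\beta=1$ because $\phi^{-1}$ does not depend on the base point; condition \atwo{} holds with $\phi_\infty:=\phi$, $\beta:=1$ and $h\equiv 0$; and \azero{} holds with $\beta:=\min\{\phi^{-1}(1),\,1/\phi^{-1}(1)\}$, since $\phi(0)=0$ together with $\phi(t)\to\infty$ forces $\phi^{-1}(1)\in(0,\infty)$. Applying Theorem~\ref{thm:ainc-infty} with $\Omega=\R^n$ then yields that $\phi$ satisfies \ainci{p} for some $p>1$.

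Next I would extend the range of \ainc{p} to all $t>0$ by revisiting the proof of Theorem~\ref{thm:ainc-infty}. The threshold $t\geq t_0\geq 1$ there appeared solely so that balls $B^t(x_0)$ of measure $1/t\leq 1$ would fall within the scope of \aone{} for $\phi^\ast$ (used in \eqref{equ:A1}) and within the hypothesis of Lemma~\ref{lem:norm-of-ball}. For an $x$-independent $\phi$, however, \eqref{equ:A1} becomes an equality with $\beta=1$ for every $t>0$, and the direct computation $\|\chi_B\|_\phi=1/\phi^{-1}(1/|B|)$ is valid for every ball of finite positive measure, so the size restriction $|B|\leq 1$ is unnecessary. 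Rerunning the chain of estimates therefore produces $e^{2C}\phi(t')\leq \phi(\tfrac12 e^{2C}t')$ for every $t'>0$, with a constant $C$ that depends only on the operator norm of $M$ and on the (now absolute) structural constants of $\phi$. A trivial adaptation of Lemma~\ref{lem:Sun-Sig} with $t_0:=0$ then gives \ainc{p} for $p:=\log 2/\log(\tfrac12 e^{2C})+1>1$ on the whole of $(0,\infty)$.

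The main technical point to verify is the bookkeeping: one must check carefully that every auxiliary ingredient in the proof of Theorem~\ref{thm:ainc-infty} (especially Lemmas~\ref{lem:norm-of-ball} and~\ref{lem:conjugate-inverse}, and the Norm conjugate formula) genuinely remains valid for balls of arbitrarily large measure once $\phi$ is independent of the base point. As a cleaner alternative, one may simply appeal to Proposition~\ref{prop:p_infty} with $\phi_\infty:=\phi$, since in the classical case $\phi$ serves as its own asymptote in \atwo{} and the proposition then delivers \ainc{} in a single step.
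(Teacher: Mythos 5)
Your main argument is essentially the same as the paper's: note that an $x$-independent $\phi\in\Phiw$ trivially satisfies \azero{}, \aone{} and \atwo{}, invoke Theorem~\ref{thm:ainc-infty} to get \ainci{}, and then observe that the only places the restriction $t\ge t_0$ entered were the need to extract $(\phi^\ast)^{-1}(x,t)$ from the integral via \aone{} and the requirement $|B^t|\le 1$ in Lemma~\ref{lem:norm-of-ball} and in fitting balls inside $\Omega_\ve$; all of this disappears when $\phi$ has no $x$-dependence and $\Omega=\Rn$, so the argument runs for every $t>0$. That matches the paper's (more terse) proof, and your bookkeeping of where $t_0$ is actually used is accurate.

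However, your proposed ``cleaner alternative'' is not sound. First, Proposition~\ref{prop:p_infty} is proved \emph{using} Corollary~\ref{cor:Orlicz-case}: in its proof one constructs an auxiliary $x$-independent function $\psi$ and applies Corollary~\ref{cor:Orlicz-case} to it to obtain \ainc{}. Appealing to Proposition~\ref{prop:p_infty} to establish Corollary~\ref{cor:Orlicz-case} would therefore be circular. Second, even setting aside the circularity, Proposition~\ref{prop:p_infty} only asserts that $\phi_\infty$ satisfies \ainc{} on some interval $[0,s]$ (the ``modified'' $\tilde\phi_\infty$ that satisfies \ainc{} globally is a \emph{different} function from $\phi_\infty$ for $t>t_1$), so even with $\phi_\infty=\phi$ it would only give \ainc{} for small $t$, not for all $t>0$. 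You should drop that alternative and keep only the rerun-of-Theorem~\ref{thm:ainc-infty} argument, which is exactly how the paper proves the corollary.
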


\begin{proof}
Since $\phi$ is independent of $x$ it satisfies \azero{}, \aone{} and \atwo{} so Theorem~\ref{thm:ainc-infty} yields \ainci{} for $\phi$. In the proof \aone{} is used to remove   $(\phi^{\ast} )^{-1}(x, t)$ out from the integral. Since $\phi$ is independent of $x$, this can be done for  all $t$. Also, as we assume that the maximal function is bounded in $\Rn$, we do not need to assume that $t> t_0$ like was done for  domain $\Omega$. Hence the same proof shows that $\phi$ satisfies \ainc{}.
\end{proof}

In Corollary~\ref{cor:ainci-bounded} we show that in a bounded domain \azero{}, \aone{} and \ainci{} implies 
the boundedness of the maximal function. Next example shows that \ainci{} can not be replaced by the condition of Theorem~\ref{thm:ainc-infty}, i.e. $\phi|_{\Omega_\ve}$ satisfies \ainci{}.

\begin{eg}
Let $\Omega := (-2, 2) \subset \R$. Let us define $p: \overline \Omega \to [1, 2]$ by $p(-2) = 1=p(2)$, $p(x) =2$ for $x \in [-1, 1]$ and $p$ is linear in $[-2, -1]$ and in $[1, 2]$. Then $p$ is Lipschitz-continuous. Let $\phi:\Omega \times [0, \infty) \to [0, \infty)$, $\phi(x, t) := t^{p(x)}$.

Since $1^{p(x)}=1$ for all $x$, $\phi$ satisfies \azero{}. Since $p$ is Lipschitz-continuous, it is $\log$-Hölder continuous and thus \aone{} holds for $\phi$,
\cite[Proposition 7.1.2]{HarH_book}. Let $\ve \in(0, 1)$. In $\Omega_\ve = (-2+\ve, 2-\ve)$ we have $p(x) >  1+ \ve$, and thus $\phi$ satisfies \inc{1+\ve}.
However $p^-= \inf p =1$ and thus by \cite[Theorem 4.7.1 and Remark 4.7.2]{DieHHR11} the maximal operator is not bounded from $L^\phi(\Omega)$ to itself.
\end{eg}

The next results shows that $\phi_\infty$ from \atwo{} satisfies \ainc{} in $[0,s]$ for small $s$.

\begin{prop}\label{prop:p_infty}
Let $\phi \in \Phiw(\Rn)$ satisfy \azero{} and \atwo{},  and suppose that the maximal function is bounded from $L^{\phi}(\Rn)$ to $L^{\phi}(\Rn)$. Then every $\phi_\infty$ satisfies \ainc{} for $t\in[0, s]$ for some $s>0$. Additionally, every $\phi_\infty$ can be modified to satisfy \ainc{} while preserving the \atwo{} property.
\end{prop}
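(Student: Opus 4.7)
The first statement will be proved by transferring \ainci{} of $\phi$ (furnished by Theorem~\ref{thm:ainc-infty} applied with $\Omega=\Rn$) to a small-$t$ \ainc{} of $\phi_\infty$ via \atwo{}, and the second by gluing a power tail above a safe threshold. Normalize \atwo{} to $s=1$ using Remark~\ref{rem:assumptions}(e); write $M:=\phi_\infty^{-1}(1)$ and recall that \azero{} yields $\phi(x,t)\le 1$ a.e.\ for $t\le\beta_0$. Since $h\ge 0$ and $h\in L^1(\Rn)$, one has $\essinf h=0$, so for every $\varepsilon>0$ the set $\{x:h(x)<\varepsilon\}$ has positive measure.

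For $0<\tau_1<\tau_2$ with $\tau_1\le\beta\beta_0$, $\tau_2\le M$, and with $\tau_1/\beta,\beta\tau_2$ in the \ainc{p} range of $\phi$ (so $\ge t_0$) and $\tau_1<\beta^2\tau_2$, I will chain
\begin{align*}
\phi_\infty(\tau_1) &\le \phi(x,\tau_1/\beta)+h(x) \le \frac{a(\tau_1/\beta)^p}{(\beta\tau_2)^p}\,\phi(x,\beta\tau_2)+h(x)\\
&\le \frac{a\tau_1^p}{\beta^{2p}\tau_2^p}\bigl(\phi_\infty(\tau_2)+h(x)\bigr)+h(x)
\end{align*}
using, in order, the second \atwo{} inequality (at $t=\tau_1/\beta$), \ainc{p} of $\phi$, and the first \atwo{} inequality (at $t=\tau_2$). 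Selecting $x$ in the positive-measure set where all the pointwise constraints are met and $h(x)\to 0$ then gives $\phi_\infty(\tau_1)/\tau_1^p\le(a/\beta^{2p})\,\phi_\infty(\tau_2)/\tau_2^p$; the complementary regime $\beta^2\tau_2\le\tau_1<\tau_2$ is handled by monotonicity of $\phi_\infty$. This produces \ainc{p} of $\phi_\infty$ on $[\beta t_0,\min(M,\beta\beta_0)]$. The main obstacle is then extending this range down to $0$ with a uniform constant: Lemma~\ref{lem:ainc-for-smaller}(a) lets $t_0$ shrink only at the cost of inflating $a$. I will circumvent this by rerunning the chain with an arbitrarily large ratio $C$ in \ainc{p} of $\phi$, which delivers $\phi_\infty(\tilde C\sigma)\ge 2\tilde C\,\phi_\infty(\sigma)$ on a bounded interval (with $\sigma=\beta^2\tau$, $\tilde C=C/\beta^2$, subject to $\tilde C^{p-1}\ge 2a/\beta^{2p}$). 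This is the small-$t$ analogue of the Sun--Sig condition in Lemma~\ref{lem:Sun-Sig}; iterating it downward yields \ainc{p'} of $\phi_\infty$ on $[0,s]$ for some $s>0$ and some $p'\in(1,1+\log 2/\log\tilde C]$.

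For the modification, set $s':=\max(M,\beta/\beta_0)$ and let
\[
\tilde\phi_\infty(t):=\begin{cases}\phi_\infty(t), & 0\le t\le s',\\ \phi_\infty(s')\,(t/s')^{p'}, & t>s'.\end{cases}
\]
Then $\tilde\phi_\infty\in\Phiw$ and satisfies \ainc{p'} globally: on $[0,s]$ by the first claim, on the bounded interval $[s,s']$ by almost-monotonicity of $\phi_\infty/t$, on $[s',\infty)$ because $\tilde\phi_\infty(t)/t^{p'}$ is constant, and the glue points match by construction. Moreover \atwo{} is preserved with the same $h$: the first inequality only binds where $\tilde\phi_\infty(t)\le 1$, which, since $\tilde\phi_\infty(s')\ge\phi_\infty(M)=1$, forces $t\le s'$, where $\tilde\phi_\infty=\phi_\infty$; the second involves $\tilde\phi_\infty(\beta t)$ with $\beta t\le\beta/\beta_0\le s'$, again on the unchanged range. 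The hard part is thus the small-$t$ iteration in the middle paragraph; the remaining gluing and \atwo{}-bookkeeping are routine once the threshold $s'$ is chosen correctly.
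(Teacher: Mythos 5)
The first part of your proposal has a genuine gap. You invoke Theorem~\ref{thm:ainc-infty} to get \ainci{} of $\phi$, then try to transfer it to small-$t$ \ainc{} of $\phi_\infty$ through \atwo{}. But \atwo{} only links $\phi$ and $\phi_\infty$ where both are at most $1$, while \ainci{} of $\phi$ lives on $[t_0,\infty)$. Your chain therefore only yields
\[
\frac{\phi_\infty(\tau_1)}{\tau_1^p}\le \frac{a}{\beta^{2p}}\frac{\phi_\infty(\tau_2)}{\tau_2^p}
\]
for $\tau_1,\tau_2$ in a bounded window $[\beta t_0,\ \min(M,\beta\beta_0)]$ that is pinned away from zero by $\beta t_0$, with a constant tied to $a=a(t_0)$. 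Your proposed fix — deriving $\phi_\infty(\tilde C\sigma)\ge 2\tilde C\,\phi_\infty(\sigma)$ and iterating downward — does not escape this: the Sun--Sig inequality you manufacture is valid only for $\sigma\in[\beta t_0,\ \min(M,\beta\beta_0)/\tilde C]$, and after one step $\sigma/\tilde C<\beta t_0$ leaves the admissible range. Shrinking $t_0$ inflates $a$ by Lemma~\ref{lem:ainc-for-smaller}, hence forces $\tilde C$ up, so the interval never extends to $0$ with a uniform constant. In fact \ainci{} of $\phi$ together with \azero{}, \aone{}, \atwo{} simply does not imply small-$t$ \ainc{} of $\phi_\infty$: take $\phi(t)=\min(t,t^2)$ (so $\phi_\infty=\phi$), which has all of \azero{}, \aone{}, \atwo{}, \ainci{2}, yet fails \ainc{} near $0$. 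Your argument uses the boundedness of $M$ only through Theorem~\ref{thm:ainc-infty}, throwing away exactly the information needed; that is why the transfer cannot close. A secondary point: Theorem~\ref{thm:ainc-infty} requires \aone{}, which this Proposition does not assume, so even if the chain worked you would have proved a weaker statement.

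The paper's proof is structurally different and is the idea you are missing. Since $\phi$ satisfies \azero{} and \atwo{}, Lemma 4.2.9 of \cite{HarH_book} gives $L^{\phi}(\Rn)\cap L^\infty(\Rn)=L^{\phi_\infty}(\Rn)\cap L^\infty(\Rn)$ with comparable norms. Cutting $\phi_\infty$ off at the \azero{} threshold produces an $x$-independent $\psi\in\Phiw$ with $L^\psi=L^{\phi_\infty}\cap L^\infty$. Because $M$ is trivially bounded on $L^\infty$ and bounded on $L^\phi$ by hypothesis, it is bounded on $L^\psi$. Now Corollary~\ref{cor:Orlicz-case} (the Orlicz-space Gallardo theorem, where \aone{} is vacuous and the argument runs for \emph{all} $t$) gives \ainc{} of $\psi$ on $(0,\infty)$, hence \ainc{} of $\phi_\infty$ on $[0,\beta_0]$. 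This is the small-$t$ information that the pointwise \atwo{} chain cannot produce. Your gluing and \atwo{}-bookkeeping for the modified $\tilde\phi_\infty$ are fine and parallel the paper's, but they rest on the unproved first claim.
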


Note that by Lemma~\ref{lem:ainc-for-smaller} we may increase $s$ to be any finite number.

\begin{proof}
Since $\phi$ satisfies \azero{}, we obtain by \cite[Lemma 4.2.9]{HarH_book} that \atwo{} is equivalent with
\begin{align}\label{eq:L-phi-infty}
L^{\phi}(\Rn) \cap L^{\infty}(\Rn) = L^{\phi_\infty}(\Rn) \cap L^{\infty}(\Rn).
\end{align}
Note that the assumption ''$\phi_\infty$ satisfies \azero'' in \cite[Lemma 4.2.9]{HarH_book} is trivially satisfied, since $\phi_\infty \in \Phiw$, and that the norms of these spaces are comparable. 

Since $\phi$ and $\phi_\infty$ satisfy \azero{}, there exists $\beta_0>0$ such that 
$\phi(x,\beta_0)\le 1 \le \phi(x,\frac1{\beta_0})$ and 
 $\phi_\infty(\beta_0)\le 1 \le \phi_\infty(\frac1{\beta_0})$.
We define $\psi\in\Phiw$ by 
\[
\psi(t):= \max\{\phi_\infty(t), \infty \chi_{(\beta_0,\infty)}(t)\}.
\]
Then $L^{\phi_\infty}(\Rn)\cap L^\infty(\Rn) = L^\psi(\Rn)$ with comparable norms. Together with \eqref{eq:L-phi-infty} we have that  $L^{\phi}(\Rn)\cap L^\infty(\Rn)$ and $L^{\psi}(\Rn)$ have comparable norms. Since the maximal function is trivially bounded from  $L^{\infty}(\Rn) \to L^{\infty}(\Rn)$ and bounded from $L^{\phi}(\Rn) \to L^{\phi}(\Rn)$ by assumption, we obtain that $M: L^\psi(\Rn) \to L^\psi(\Rn)$ is bounded.

By Corollary~\ref{cor:Orlicz-case} we obtain that $\psi$ satisfies \ainc{}. But this yields that $\phi_\infty$ satisfies \ainc{} for $[0, s]$ where $s = \beta_0$, as was to be proved.

Next we show that $\phi_\infty$ can be modified so that it satisfies \ainc{} but remains valid for \atwo{}. 
By assumption $\phi$ satisfies \atwo{} i.e  the condition is satisfied for all $t$ such that $\phi_\infty(t) \in [0,1]$ and $\phi(x,t) \in [0,1]$ for almost every $x \in \Omega$. Let 
\begin{align*}
t_1 := \frac12 \sup \{t: \max\{\phi_\infty(t), \phi(x,t)\} \leq 1 \text{ for a.e } x \in \Omega\}.
\end{align*}
By \azero{} we have   $\phi(x, \beta)\le 1$, and hence $t_1 >0$.
Now $\phi(x,t), \phi_\infty(t) \in [0,1]$ for all $t \in [0,t_1]$. Thus  $\phi$ satisfies \atwo{} with $\phi_\infty$ for all $t \in [0,t_1]$.

 We have proved that all asymptotes $\phi_\infty$ satisfy \ainc{p}, where $p>1$, in $[0,s]$ and by Lemma \ref{lem:ainc-for-smaller} we may assume that $s=t_1$. We define a new asymptote
\begin{align*}
\tilde \phi_\infty (t):=
\begin{cases}
\phi_\infty(t), &t \in [0,t_1]; \\
\phi_\infty(t_1) + (t-t_1)^p, & t >t_1.
\end{cases}
\end{align*}
A short calculation shows that $\tilde{\phi}_\infty \in \Phiw$ and it satisfies \ainc{p}. As $\tilde{\phi}_\infty(t) = \phi_\infty(t)$ when $t \in [0,t_1]$, it is a valid asymptote of $\phi$ for \atwo{}. Thus we can always choose an asymptote $\tilde{\phi}_\infty$ that  satisfies \ainc{}.
\end{proof}

Every asymptote $\phi_\infty$ might not necessarily satisfy \ainci{} but we can always modify it for large $t$ so that it satisfies \ainc{}. 
However, if $h$ from \atwo{} satisfies $\lim_{|x|\to \infty}h(x) =0$, we can choose a more natural asymptote that satisfies \ainc{}.

\begin{rem}\label{rem:h=0}
Let $\phi \in \Phiw(\Rn)$ satisfy \azero{}, \aone{} and \atwo{} and suppose that the maximal function is bounded from $L^{\phi}(\Rn)$ to $L^{\phi}(\Rn)$.
Let $h\in L^1(\Rn) \cap L^\infty (\Rn)$ be from \atwo{} of  $\phi$, and assume additionally that $\lim_{|x| \to \infty} h(x) =0$. In this case we can choose that
$\phi_\infty = \phi^+_\infty$ or $\phi_\infty = \phi^-_\infty$, where
\begin{align*}
\phi^+_\infty (t) := \limsup_{|x| \to \infty} \phi(x, t) \quad \text{and} \quad \phi^-_\infty (t) := \liminf_{|x| \to \infty} \phi(x, t). 
\end{align*}
$\phi_{\infty}^{+}, \phi_{\infty}^{-} \in \Phiw$ since $\phi$ satisfies \azero{} \cite[Lemma 2.5.18]{HarH_book}.

Let us choose that $\phi_\infty = \phi^+_\infty$. By Proposition~\ref{prop:p_infty} $\phi^+_\infty$ satisfies \ainc{p_1} for $t \in [0, s]$ for some $p_1 >1$ and $s>0$ and by Theorem~\ref{thm:ainc-infty} $\phi$ satisfies \ainci{p_2} for $t \in [t_0,\infty)$ and for some $p_2 >1$. Let $p := \min \{p_1, p_2\}$ and notice that $\phi$ satisfies \ainc{p} for $t \in [t_0, \infty)$ and $\phi_\infty^{+}$ satisfies \ainc{p} for $t \in [0, s]$. By Lemma \ref{lem:ainc-for-smaller} we can decrease $t_0$ to be smaller than $s$.  Then we obtain for
$t_0 \le t'<s'$ that
\[
\limsup_{|x|\to \infty} \frac{\phi(x, t')}{t'^p} \le a \limsup_{|x|\to \infty} \frac{\phi(x, s')}{s'^p},
\] 
 and hence $\phi^+_\infty$ satisfies \ainc{p} for $t \in [t_0, \infty)$ as well. Since it already satisfied \ainc{p} for $[0,s]$, it satisfies \ainc{p} for all $t\geq 0$ for some $p>1$.
 Similarly we can show that $\phi^-_\infty$ satisfies \ainc{}. 
 \end{rem}

Colombo and Mingione proved in the double phase case that the maximal function is bounded provided that 
$1 < p \leq q <\infty$,  $a \in C^{0,\alpha}(\Omega)$ is bounded and $\frac{q}{p} \leq 1 + \frac{\alpha}{n}$ , \cite{ColM15a}. 
The next corollary  shows that $p>1$ is necessary.

\begin{cor}\label{cor:dp-case}
Let $\phi(x,t) := t^{p} + a(x) t^{q}$. 
If the maximal function is bounded from $L^{\phi}(\Omega)$ to $L^{\phi}(\Omega)$, then $p>1$.
\end{cor}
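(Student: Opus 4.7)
The plan is to apply Theorem \ref{thm:ainc-infty} directly to $\phi(x,t) = t^p + a(x)t^q$. Under the standard assumptions on $a$ (e.g.\ bounded and suitably Hölder continuous, as recorded in Table \ref{tab:A-ehdot}), $\phi$ satisfies \azero{}, \aone{}, and \atwo{}, so the hypotheses of Theorem \ref{thm:ainc-infty} are in force. It will deliver, for each $\ve > 0$, constants $p_1 > 1$, $t_0 > 0$, and $C \geq 1$ such that
\[
\frac{t^p + a(x) t^q}{t^{p_1}} \leq C\,\frac{s^p + a(x) s^q}{s^{p_1}}
\]
for a.e.\ $x \in \Omega_\ve$ and all $t_0 \leq t < s$.

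The crux of the argument is then to restrict this inequality to points at which $a(x) = 0$, i.e.\ the \emph{$p$-phase} of the functional, where any obstruction to the boundedness of $M$ must come from the term $t^p$ alone. On such $x$ the displayed inequality collapses to $t^{p-p_1} \leq C\, s^{p-p_1}$, equivalently $(s/t)^{p_1-p} \leq C$. Fixing $t = t_0$ and letting $s \to \infty$ then forces $p_1 - p \leq 0$, so $p \geq p_1 > 1$.

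The main obstacle I anticipate is the degenerate situation in which the zero set $\{a = 0\}$ has Lebesgue measure zero. A quantitative refinement of the same idea should still yield $p > 1$: for $x$ with $a(x)$ small, the function $t \mapsto (t^p + a(x)t^q)/t^{p_1}$ attains its minimum at $s^* \approx a(x)^{-1/(q-p)}$ with value $\approx a(x)^{(p_1 - p)/(q-p)} \to 0$ as $a(x) \to 0^+$, while the value at $t_0$ stays bounded away from zero. Since the \ainc{p_1} constant $C$ is uniform in $x \in \Omega_\ve$, this rules out $\essinf_{\Omega_\ve} a = 0$; in the only remaining scenario, where $a$ is bounded below by a positive constant, $\phi$ is pointwise equivalent to the Orlicz function $t^p + t^q$ on $\Omega_\ve$, and Corollary \ref{cor:Orlicz-case} (applied on $\Rn$) yields $p > 1$.
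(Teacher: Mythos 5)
Your proposal takes a genuinely different route from the paper, and while the core of your first case works, the overall argument is substantially longer and less clean than it needs to be. The paper's proof is a two-liner: the double phase function satisfies \azero{} and \atwo{} with asymptote $\phi_\infty(t) = t^p$ and $h \equiv 0$, so Proposition~\ref{prop:p_infty} (boundedness of $M$ forces every \atwo{}-asymptote to satisfy \ainc{} for small $t$) applies directly to $t^p$ and gives $p > 1$. This uses neither \aone{} nor any case analysis on $a$. The conceptual point you missed is that the $t^p$-phase governs the behaviour of $\phi$ near $t = 0$, and so the correct mechanism to detect $p > 1$ is a small-$t$ statement — precisely what Proposition~\ref{prop:p_infty} supplies via $\phi_\infty$. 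By instead invoking Theorem~\ref{thm:ainc-infty}, which only yields \ainc{} for \emph{large} $t$, you are forced to hunt for points $x$ where the $t^q$-phase is weak (i.e.\ $a(x)$ is zero or small) so that the large-$t$ inequality can ``see'' the $t^p$ term; this is what creates the case split.

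Beyond the extra length, two concrete issues. First, your route requires \aone{}, and hence the Hölder continuity condition $a \in C^{0,\frac{n}{p}(q-p)}$ from Table~\ref{tab:A-ehdot}; the paper's route needs only $a \in L^\infty$, so it proves a strictly more general statement. Second, your handling of the residual case $\essinf a > 0$ is shaky: Corollary~\ref{cor:Orlicz-case} assumes the Orlicz function is defined on and $M$ is bounded over $\Rn$, but you only have the equivalence $\phi \approx t^p + t^q$ on $\Omega_\varepsilon$, and boundedness of $M$ on $L^\phi(\Omega)$ does not transfer to $L^{t^p+t^q}(\Rn)$ unless $\Omega = \Rn$. (To be fair, the paper's own invocation of Proposition~\ref{prop:p_infty} also implicitly needs $\Omega = \Rn$.) Also, a wording slip: your quantitative estimate does not ``rule out $\essinf_{\Omega_\varepsilon} a = 0$''; rather, under $\essinf a = 0$ it rules out $p < p_1$, which already gives $p > 1$ — the case $\essinf a > 0$ is then a separate alternative, not ``the remaining scenario'' forced by the estimate.
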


\begin{proof}
We obtain by \cite[Propositions 7.2.1]{HarH_book} that $\phi$ satisfies \azero{} and \atwo{} with  $\phi_{\infty}(t) = t^p$ and $h \equiv 0$.
Therefore Proposition~\ref{prop:p_infty} shows that $\phi_{\infty}(t) = t^p$ satisfies \ainc{} for small $t$ i.e. $p >1$.
\end{proof}

Our second main result gives a sharp growth condition for boundedness of the maximal function.

\begin{thm}\label{thm:weak}
Let $\phi \in \Phiw(\Rn)$ satisfy \azero{}, \aone{} and \atwo{}. Then the Hardy--Littlewood maximal function is bounded from $L^{\phi}(\Rn)$ to $L^{\phi}(\Rn)$ if and only if  there exists $\psi \in \Phiw(\Rn)$ such that 
$\phi$ and $\psi$ are weakly equivalent and $\psi$ satisfies \azero{}, \aone{}, \atwo{} and \ainc{}.
\end{thm}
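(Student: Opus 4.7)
The forward direction is immediate. Assume such $\psi$ exists; then \cite[Theorem~4.3.4]{HarH_book} gives boundedness of $M$ on $L^{\psi}(\Rn)$, and $\phi\sim\psi$ yields $L^{\phi}(\Rn)=L^{\psi}(\Rn)$ with comparable norms by \cite[Corollary~3.2.7]{HarH_book}, hence $M$ is bounded on $L^{\phi}(\Rn)$ as well. For the reverse direction, assume $M$ is bounded on $L^{\phi}(\Rn)$. After replacing $\phi$ by an equivalent $\Phis$-function with $\phi(x,1)=1$ via \cite[Lemma~3.7.3]{HarH_book} (equivalence implies weak equivalence and $\sim$ is transitive, so it suffices to produce a $\psi$ for the normalized $\phi$), Theorem~\ref{thm:ainc-infty} together with Lemma~\ref{lem:ainc-for-smaller}(a) gives that $\phi$ satisfies \ainc{p_1} on $[1,\infty)$ for some $p_1>1$, and Proposition~\ref{prop:p_infty} produces a modified asymptote $\tilde{\phi}_\infty$ satisfying \ainc{p_2} on $[0,\infty)$ for some $p_2>1$ and still serving as a valid asymptote in \atwo{} of $\phi$. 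I arrange the modification point $t_1$ of $\tilde{\phi}_\infty$ to satisfy $t_1<1$, which guarantees $\tilde{\phi}_\infty(1)\ge(1-t_1)^{p_2}>0$; setting $p:=\min(p_1,p_2)>1$, both \ainc{p_1} and \ainc{p_2} hold with the common exponent $p$.

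I then define the candidate by gluing at $t=1$:
\[
\psi(x,t):=\begin{cases} \tilde{\phi}_\infty(t)/\tilde{\phi}_\infty(1), & 0\le t\le 1,\\ \phi(x,t), & t>1, \end{cases}
\]
so that the two pieces agree at $t=1$ with $\psi(x,1)=1$. The function $\psi$ lies in $\Phiw(\Rn)$, and the four regularity conditions can be verified in turn. Condition \azero{} is immediate from $\psi(x,1)=1$. For \aonep{} (which combined with \azero{} yields \aone{}), the case $\beta t\ge 1$ reduces to \aonep{} of $\phi$ and the case $\beta t<1$ uses $\psi(x,\beta t)\le 1\le\psi(y,t)$, exactly as in the proof of Theorem~\ref{thm:bounded}. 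Condition \atwo{} holds with $\psi_\infty:=\tilde{\phi}_\infty/\tilde{\phi}_\infty(1)$, $h_\psi\equiv 0$ and $\beta_\psi=1$, since $\psi_\infty(t)\le 1$ and $\psi(x,t)\le 1$ both happen exactly when $t\le 1$, and on $[0,1]$ we have $\psi=\psi_\infty$ identically. For \ainc{p}, each piece satisfies it, and for $s\le 1<t$ a short calculation using \ainc{p} of $\tilde{\phi}_\infty$ on $[0,1]$, \ainc{p} of $\phi$ on $[1,\infty)$, and $\psi(x,1)=1$ yields $\psi(x,s)/s^p\le a\le a^2\,\psi(x,t)/t^p$.

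The main obstacle is the weak equivalence $\phi\sim\psi$. On $(1,\infty)$ the functions coincide, so the nontrivial range is $t\le 1$. For $t\le t_1$ we have $\tilde{\phi}_\infty(t)=\phi_\infty(t)$, and both halves of \atwo{} of $\phi$ deliver $\phi_\infty(t)\le\phi(x,t/\beta)+h(x)$ and $\phi(x,t)\le\phi_\infty(t/\beta)+h(x)$ within the allowed subranges. Dividing or multiplying these by $\tilde{\phi}_\infty(1)$ and absorbing the resulting scalar into the argument via \inc{1} of $\phi$ and \ainc{p} of $\tilde{\phi}_\infty$ (at the cost of a fixed dilation $L$ and replacing $h$ by $h/\tilde{\phi}_\infty(1)\in L^1(\Rn)$) yields the two weak-equivalence bounds on $[0,\beta t_1]$. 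On the transition strip $(\beta t_1,1]$ both $\psi(x,t)$ and $\phi(x,t)$ are bounded by~$1$, and choosing $L\ge 1/(\beta t_1)$ forces $Lt\ge 1$, so the crude bounds $\psi(x,t)\le 1=\phi(x,1)\le\phi(x,Lt)$ and $\phi(x,t)\le 1=\psi(x,1)\le\psi(x,Lt)$ close the gap. A single $L$ handling all subranges then produces $\phi\sim\psi$. The principal technical difficulty will be this transition-strip bookkeeping together with the careful use of \ainc{p} of $\phi$ to absorb the factor $1/\tilde{\phi}_\infty(1)$ when it exceeds~$1$.
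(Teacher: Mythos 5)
Your proof is correct and follows the same overall strategy as the paper (glue an asymptote $\phi_\infty$ at small $t$ to $\phi$ at large $t$), but the technical implementation is genuinely different. The paper does not normalize $\phi$: its candidate is
\[
\psi(x,t) := \begin{cases}\phi_\infty(\beta_2 t), & t\in[0,t_1],\\ \phi_\infty(\beta_2 t_1)+\max\{\phi(x,t)-\phi_\infty(\beta_2 t_1),0\}, & t>t_1,\end{cases}
\]
glued at $t_1:=\tfrac12\sup\{t:\max\{\phi_\infty(t),\phi(x,t)\}\le 1\}$, using the $\max$-construction to keep $\psi$ non-decreasing and to delay the changeover to $\phi(x,t)$ until $\phi(x,t)\ge\phi_\infty(\beta_2 t_1)$. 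You instead normalize $\phi$ to a $\Phis$-function with $\phi(x,1)=1$, import the modified asymptote $\tilde\phi_\infty$ from Proposition~\ref{prop:p_infty}, rescale by $\tilde\phi_\infty(1)$, and glue at $t=1$ so that $\psi(x,1)=1$ holds identically. Your route buys very clean verifications of \azero{}, \aone{} and \atwo{} (the latter with $h_\psi\equiv 0$, $\beta_\psi=1$, $s=1$, since $\psi\equiv\psi_\infty$ on $[0,1]$ and, by convexity of the normalized $\phi$, $\psi(x,t)>1$ and $\psi_\infty(t)>1$ exactly for $t>1$); the \ainc{} check across $t=1$ is also short since $\psi(x,1)=1$ pins both pieces. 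The price is that you must first justify that the normalization preserves \azero{}--\atwo{} (with the same $\phi_\infty$ but a different $\beta$) and that \ainc{} is inherited back via Lemma~\ref{lem:ainc-equivalent} and transitivity of $\sim$ — both of which you correctly flag. Note also that $t_1<1$ needs no arrangement: with $\phi(x,1)=1$ the supremum defining $t_1$ is at most $1$, so $t_1\le\tfrac12$ automatically. The weak-equivalence verification is the one place where your proposal is a sketch rather than a proof, but the structure you indicate (use \atwo{} on $[0,\min(t_1,\beta_2)]$ with the scalar $1/c$ absorbed through convexity and \ainc{p} of $\tilde\phi_\infty$; handle the strip up to $t=1$ by the crude bound $\psi(x,t)\le 1\le\phi(x,Lt)$ once $L\ge 1/\min(t_1,\beta_2)$) does close, and the resulting $h_\psi$ is a constant multiple of $h$, hence in $L^1$. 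One small inaccuracy: in the cross-boundary \ainc{} estimate the correct constant is $a_1 a_2$, not $a^2$ for a single $a$, since the two pieces may carry different \ainc{} constants; this is harmless.
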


\begin{proof}
Assume first that $\psi$ satisfies \azero{}, \aone{}, \atwo{} and \ainc{}. Then by  \cite[Theorem 4.3.4]{HarH_book} $M: L^\psi(\Rn) \to L^\psi(\Rn)$ is bounded. Since $\psi \sim \phi$, we have $L^\psi(\Rn) = L^\phi(\Rn)$ with comparable norms, and thus $M: L^\phi(\Rn) \to L^\phi(\Rn)$ is bounded.

Assume then that $\phi \in \Phiw(\Rn)$ satisfies \azero{} with constant $\beta_0$, \aone{} and \atwo{}, and $M: L^\phi(\Rn) \to L^\phi(\Rn)$ is bounded.
Let  $\phi_\infty \in \Phiw$, $h\in L^1(\Rn) \cap L^{\infty}(\Rn)$, $\beta_2 <1$ and $s>0$ be from \atwo{} of $\phi$. Define
\begin{align*}
t_1 := \frac12 \sup \{t: \max\{\phi_\infty(t), \phi(x,t)\} \leq 1 \text{ for a.e } x \in \Omega\}.
\end{align*}
Since $\phi$ satisfies \azero{}, we have $\phi(x, \beta_0) \le 1$ for almost all $x$, and  thus $t_1 >0$.
Moreover $\phi_\infty(t_1)\le 1$ and $\phi(x, t_1) \le 1$ for almost every $x$. 
As in the proof of Proposition \ref{prop:p_infty}, we see that \atwo{} of $\phi$ holds for some $s'>0$ when we restrict ourselves to $t \in [0,t_1]$. 
We define
\begin{align*}
\psi(x,t) := 
\begin{cases}
\phi_\infty(\beta_2 t), &t \in [0, t_1] \\
\phi_\infty(\beta_2 t_1) + \max\{\phi(x,t)-\phi_\infty(\beta_2 t_1),0\}, & t >t_1.
\end{cases}
\end{align*}
We need to prove that it satisfies the properties in the claim.

Let us first show that $\psi$ satisfies \ainc{}. By Proposition~\ref{prop:p_infty} and Lemma~\ref{lem:ainc-for-smaller} $\phi_\infty$ satisfies \ainc{} 
for $t \in (0, t_1]$. Since $\phi_\infty(\beta_2 t_1) \le \phi_\infty(\beta_2 t_1) + \max\{\phi(x,t)-\phi_\infty(\beta_2 t_1),0\}$, we need only show that
$t \mapsto \phi_\infty(\beta_2 t_1) + \max\{\phi(x,t)-\phi_\infty(\beta_2 t_1),0\}$ satisfies \ainc{} for $t \in [t_1, \infty)$. 
Thus by taking smaller of the two exponent  we have \ainc{} for $\psi$.

Let us then study $t \mapsto \phi_\infty(\beta_2 t_1) + \max\{\phi(x,t)-\phi_\infty(\beta_2 t_1),0\}$ on $[t_1, \infty)$.
By Theorem~\ref{thm:ainc-infty} and Lemma~\ref{lem:ainc-for-smaller} $\phi$ satisfies \ainc{p}, $p>1$ for 
$t \in [t_1, \infty)$ with a constant $a$.  
\azero{} yields  $\phi(x, 1/\beta_0) \ge 1$ for almost every $x$. Thus for $t \ge 1/\beta_0$, we have
$\phi_\infty(\beta_2 t_1) + \max\{\phi(x,t)-\phi_\infty(\beta_2 t_1),0\} = \phi(x, t)$, and hence \ainc{p} is clear for $t\ge 1/\beta_0$. 
If $t_1\le s< t\le 1/\beta_0$, then
\begin{equation*}
\begin{split}
 &\frac{\phi_\infty(\beta_2 t_1) + \max\{\phi(x,s)-\phi_\infty(\beta_2 t_1),0\}}{s^p}\\
&\qquad\le  \frac{t^p}{s^p} \frac{\phi_\infty(\beta_2 t_1)}{t^p} + \max\Big\{a \frac{\phi(x,t)}{t^p}- \frac{t^p}{s^p} \frac{\phi_\infty(\beta_2 t_1)}{t^p},0\Big\}.
\end{split}
\end{equation*}
Depending which one is greater in the maximum, we have two cases:
\[
\begin{split}
 \frac{\phi_\infty(\beta_2 t_1) + \max\{\phi(x,s)-\phi_\infty(\beta_2 t_1),0\}}{s^p} &\le \frac{t^p}{s^p} \frac{\phi_\infty(\beta_2 t_1)}{t^p} \le  \frac{1}{(\beta_0 t_1)^p} \frac{\phi_\infty(\beta_2 t_1)}{t^p}\\
&\le   \frac{1}{(\beta_0 t_1)^p}  \frac{\phi_\infty(\beta_2 t_1) + \max\{\phi(x,t)-\phi_\infty(\beta_2 t_1),0\}}{t^p}
\end{split}
\]
or 
\[
\begin{split}
 \frac{\phi_\infty(\beta_2 t_1) + \max\{\phi(x,s)-\phi_\infty(\beta_2 t_1),0\}}{s^p} &\le a \frac{\phi(x,t)}{t^p}\\ 
 &\le  a \frac{\phi_\infty(\beta_2 t_1) + \max\{\phi(x,t)-\phi_\infty(\beta_2 t_1),0\}}{t^p}.
\end{split}
\]
This completes the case $t \in [t_1, 1/\beta_0]$.

Other properties of weak $\Phi$-function follows easily, since both $\phi$ and $\phi_\infty$ are weak $\Phi$-functions, and thus $\psi \in \Phiw(\Omega)$. 

 Clearly $\psi(x,t_1)\leq 1$. Let  $\beta_0$ be from \azero{} of $\phi$, as before.  Then $\phi(x, 1/\beta_0) \ge 1$ and hence $t_1 < 1/\beta_0$. Thus we obtain  $\psi(x, 1/\beta_0) = \phi(x, 1/\beta_0) \ge 1$. 
These yield that $\psi$ satisfies \azero{}.
Now because firstly $\phi$ satisfies \aonep{} and $\psi(x,t) = \phi(x,t)$, when $\psi(x,t)\geq 1$, 
and secondly $\psi$ satisfies \azero{}, we conclude that $\psi$ satisfies \aone{}. 
As $\psi(x,t) = \phi_\infty(\beta_2 t)$ and $\phi_\infty(\beta_2 t) \in [0,s']$ when $t \in [0,t_1]$, 
we see that $\psi$ satisfies \atwo{} with $\phi_\infty(\beta_2 t)$, $h\equiv 0$ and a constant $1$.

Let us finally prove that $\phi \sim \psi$. If $t \in [0,t_1]$, then by \atwo{} of $\phi$ we get 
\begin{align*}
\phi(x,\beta_2^2 t) \leq \phi_\infty(\beta_2 t) + h(x) =  \psi(x,t) + h(x).
\end{align*}
If on the other hand $t > t_1$, then
\begin{align*}
\phi(x,\beta_2^2 t) \leq \phi(x,t) \leq \psi(x,t)
\end{align*}
and therefore $\phi(x,\beta_2^2 t) \leq \psi(x,t) + h(x)$ for all $t \geq 0$. 

Let us now look the other direction. If $t \in [0, t_1]$, then by \atwo{} of $\phi$ we get 
\begin{align*}
\psi(x,t) = \phi_\infty(\beta_2 t) \leq \phi(x,t) + h(x).
\end{align*}
For $t \geq t_1$ we have two cases: either $\phi(x, t) \geq \phi_\infty( \beta_2 t_1)$ or not. If this inequality holds, then
\begin{align*}
 \psi(x, t)  = \phi_\infty(\beta_2 t_1) + \max\{\phi(x, t) - \phi_\infty(\beta_2 t_1),0\} =  \phi(x,t).
\end{align*}
Let us then consider the case $\phi(x,t) < \phi_\infty(\beta_2 t_1)$. Now since $\phi$ satisfies the condition of \atwo{} for $t = t_1$, we have
\begin{align*}
 \psi(x,t) &= \phi_\infty(\beta_2 t_1) + \max\{\phi(x,t) - \phi_\infty(\beta_2 t_1),0\} = \phi_\infty(\beta_2 t_1) \leq \phi(x,t_1) + h(x) \\
&\leq \phi(x,t) + h(x).
\end{align*}
Therefore we also have that $\psi(x, t) \leq \phi(x,t) + h(x)$ for all $t \geq 0$.
All in all, we have shown that $\phi \sim \psi$ with a constant $\beta_2^2$ and function $h$ from \atwo{} of $\phi$.
\end{proof}

\section*{Acknowledgements}
We would like to thank the anonymous referees for the helpful suggestions in improving some of the proofs. 


\bigskip

\noindent\small{\textsc{P. Harjulehto}}\\
\small{Department of Mathematics and Statistics,
FI-20014 University of Turku, Finland}\\
\footnotesize{\texttt{petteri.harjulehto@utu.fi}}\\

\noindent\small{\textsc{A. Karppinen}}\\
\small{Department of Mathematics and Statistics,
FI-20014 University of Turku, Finland}\\
\footnotesize{\texttt{arttu.a.karppinen@utu.fi}}\\

\end{document}